\newcommand \C[1]{{\mathcal #1}}
\newcommand \wti[1]{{\widetilde {#1}}}
\newcommand \bC{{\mathbb C}}
\newcommand \bR{{\mathbb R}}
\newcommand \bZ{{\mathbb Z}}
\newcommand \bN{{\mathbb N}}
\newcommand\CH{{\C H}}
\newcommand\CO{{\C O}}
\newcommand\al{{\alpha}}
\newtheorem{theorem}{Theorem}[section]
\newtheorem{lemma}[theorem]{Lemma}
\newtheorem{proposition}[theorem]{Proposition}
\newtheorem{definition}[theorem]{Definition}
\newtheorem{example}[theorem]{Example}
\newcommand\Hom{\operatorname{Hom}}
\newcommand\Ext{\operatorname{Ext}}
\newcommand\Ind{\operatorname{Ind}}
\newcommand\tr{\operatorname{tr}}
\newcommand\triv{\mathsf{triv}}
\newcommand\sgn{\mathsf{sgn}}
\newcommand\refl{\mathsf{refl}}
\newcommand\Irr{\mathsf{Irr}}
\newcommand\el{\mathsf{ell}}
\newcommand\res{\mathsf{res}}
\newcommand\ind{\mathsf{ind}}
\newcommand\EP{\mathsf{EP}}
\newcommand\temp{\mathsf{temp}}
\newcommand\ps{\mathsf{ps}}
\newcommand\Inn{\mathsf{Inn}}
\newcommand\ep{\mathsf{ep}}
\newcommand\Id{\operatorname{Id}}
\newcommand\proj{\operatorname{proj}}
\def\<{\langle} 
\def\>{\rangle}
\numberwithin{equation}{section}
\begin{document}

\title[The elliptic nonabelian Fourier transform for unipotent representations ]{On the elliptic nonabelian Fourier transform for unipotent representations of $p$-adic groups}

\author
{Dan Ciubotaru}
        \address[D. Ciubotaru]{Mathematical Institute, University of Oxford, Oxford OX2 6GG, UK}
        \email{dan.ciubotaru@maths.ox.ac.uk}

\author{Eric Opdam}
\address[E. Opdam]{Korteweg-de Vries Institute for Mathematics\\Universiteit van Amsterdam\\Science Park 105-107\\ 1098 XG Amsterdam, The Netherlands}
\email{e.m.opdam@uva.nl} 

\begin{abstract}In this paper, we consider the relation between two nonabelian Fourier transforms. The first  one is defined in terms of the Langlands-Kazhdan-Lusztig parameters for unipotent elliptic representations of a split $p$-adic group and the second is defined in terms of the pseudocoefficients of these representations and Lusztig's nonabelian Fourier transform for characters of finite groups of Lie type. We exemplify this relation in the case of the $p$-adic group of type $G_2$.
\end{abstract}

\keywords{nonabelian Fourier transform, unipotent representations, elliptic representations}

\dedicatory{To Roger Howe, with admiration}


\subjclass[2010]{22E50}

\maketitle

\setcounter{tocdepth}{1}
\tableofcontents

\section{Introduction}
In this paper,  we consider the relation between two nonabelian Fourier transforms: the first defined in terms of the Langlands-Kazhdan-Lusztig parameters for unipotent elliptic representations of a split $p$-adic group and the second, defined in terms of the pseudocoefficients of these representations and Lusztig's nonabelian Fourier transform for characters of finite groups of Lie type. In this introduction, we give a brief outline of the ideas involved, leaving the precise definitions for the main body of the paper.

\smallskip

Let $G$ be a semisimple $p$-adic group. Lusztig \cite{Lu2} defined the category $\C C_u(G)$ of unipotent $G$-representations. An informal characterization of $\C C_u(G)$ is that it is the smallest full subcategory of smooth $G$-representations such that: (a) it contains the irreducible representations with Iwahori-fixed vectors \cite{Bo}, and (b) it is closed under partition into L-packets. Let $R_u(G)$ be the complexification of the Grothendieck group of admissible representations in $\C C_u(G)$. Let $\CH(G)$ be the Hecke algebra of $G$ with respect to a Haar measure on $G$ and let $\overline\CH(G)$ be the cocenter of $\CH(G)$. 

If one is interested in the study of characters of admissible representations, the basic case is that of elliptic tempered representations (\cite{Ar,BDK}). Let  $\overline R_u(G)_\el$ be the elliptic representation space, a certain  subspace of $R_u(G)$ isomorphic to the quotient of $R_u(G)$ by the span of all properly parabolically induced characters. To every elliptic character $\pi\in \overline R_u(G)_\el$, one attaches a pseudocoefficient $f_\pi \in \overline \CH(G)$ (\cite{Ar,Ka2,SS}). The functions $f_\pi$ play an important role in the character formulas for elliptic representations: for example, if $\pi$ is an irreducible square integrable representation, then $f_\pi(1)$ is the formal degree of $\pi$,  e.g., \cite[Proposition III.4.4]{SS}.  Let $\overline\CH(G)_u^\el$ denote the subspace of $\overline\CH(G)$ spanned by the pseudocoefficients $f_\pi$ for $\pi\in \overline R_u(G)_\el$. Thus, we have a map, in fact, an isomorphism, $\overline R_u(G)_\el\to\overline\CH(G)_u^\el $. 

The two spaces involved have natural inner products. The elliptic representations space $\overline R_u(G)_\el$ carries the Euler-Poincar\' e pairing $\EP$ \cite{Ka2,SS}. With respect to $\EP$, the irreducible square integrable representations are an orthonormal set. On the other hand, the space $\overline\CH(G)_u^\el$ can be endowed with a character pairing coming from the ordinary character pairing on the reductive quotients of maximal parahoric subgroups, which are finite groups of Lie type. As noticed in \cite{MW}, the pseudocoefficients can be chosen so that the resulting map $ \res_{u,\el}:\overline R_u(G)_\el\to\overline\CH(G)_u^\el$ is an isometry with respect to the two pairings. 

Next, one defines the nonabelian Fourier transform $\C F\C T_{u,\el}:\overline\CH(G)^\el_u\to \overline\CH(G)^\el_u$ essentially by truncating to the elliptic spaces Lusztig's nonabelian transform \cite{orange} on each reductive quotient of a maximal parahoric subgroup. 

Now suppose that $G$ is an adjoint simple split group. Let $\Inn~G$ denote the class of forms that are inner to $G$.  The irreducible elliptic tempered modules for the groups in $\Inn~G$ are parameterized in terms of Kazhdan-Lusztig parameters \cite{KL,Lu2,Op,Re,Re2,Wa} in the dual complex group $G^\vee$. Define a dual elliptic nonabelian Fourier transform
\begin{equation}
\C F\C T^{\vee}_{u,\el}: \bigoplus_{G'\in \Inn~G}\overline R_u(G')_\el\longrightarrow \bigoplus_{G'\in \Inn~G}\overline R_u(G')_\el
\end{equation} 
by the requirement that the diagram
\begin{equation}\label{main-diagram}\displaystyle{
\xymatrixcolsep{5pc}\xymatrix{
\bigoplus_{G'\in \Inn~G}\overline R_u(G')_\el \ar[d]_{\res_{u,\el}} \ar[r]^{\C F\C T^{\vee}_{u,\el}} &\bigoplus_{G'\in \Inn~G}\overline R_u(G')_\el \ar[d]^{\res_{u,\el}}\\
\bigoplus_{G'\in \Inn~G}\overline\CH(G')^\el_u \ar[r]_{\C F\C T_{u,\el}} &\bigoplus_{G'\in \Inn~G}\overline\CH(G')^\el_u 
} }
\end{equation}
is commutative. The expectation is that $\C F\C T^{\vee}_{u,\el}$ behaves well with respect to the Kazhdan-Lusztig parameters. For example, if $n\in G^\vee$ is a unipotent element and we write $(\Inn~G)_\el^n=\bigoplus_{G'\in \Inn~G}\overline R_u(G')^n_\el$ for the span of all unipotent representations whose KL parameter has unipotent part conjugate to $n$, then we expect that $\C F\C T^{\vee}_{u,\el}$ is block-diagonal: 
\begin{equation}
\C F\C T^{\vee}_{u,\el}=\bigoplus_n \C F\C T^{\vee,n}_{u,\el}
\end{equation}
 with respect to the decomposition $\bigoplus_n (\Inn~G)_\el^n$. Moreover, we expect that the piece $\C F\C T^{\vee,n}_{u,\el}$ is an elliptic version of the nonabelian Fourier transform recently defined in \cite{L5} in terms of the reductive part of the centralizer $Z_{G^\vee}(n)$. In particular, if $n$ is a distinguished unipotent element, then we believe that $\C F\C T^{\vee,n}_{u,\el}$ will transform the parameters in the same way as the original nonabelian Fourier transform \cite{orange} defined in terms of the group of components of  $Z_{G^\vee}(n)$.

\smallskip

One motivation for these expectations is provided by the work of Moeglin and Waldspurger \cite{MW,Wa} where $G$ is the split form of $SO(2n+1)$. In the present paper, we also verify that these conjectures are precisely true when $G$ has type $G_2$. Further evidence is provided by \cite{CO}, which considered a similar picture for the formal degrees, in other words, the ``evaluation at 1'' of the diagram (\ref{main-diagram}).

In \cite{MW}, the commutative diagram (\ref{main-diagram}) was used in order to verify the stability of L-packets for odd orthogonal groups. The diagram involving the two nonabelian Fourier transforms should also  be related to the recent conjectures of Lusztig regarding unipotent almost characters of semisimple $p$-adic groups \cite{KmL,L4,L5}. In fact, we have arrived at this setup from our attempt to express formal degrees of unipotent discrete series representations in terms of certain invariants, which we called elliptic fake degrees in \cite{CO} and which admit a geometric interpretation. It is our hope that this approach will contribute to a better understanding of the relation between the characters of elliptic unipotent representations,  in the form of the local character expansion of Howe \cite{Ho} and Harish-Chandra \cite{HC}, and the geometry of the affine flag manifold as predicted by the conjectures of Lusztig.

\

\noindent{\bf Acknowledgements.} This research was supported in part by the ERC Advanced Grant no. 268105 and by the EPSRC grant EP/N033922/1. We thank the organizers of the conference ``Representation Theory, Number Theory and Invariant Theory: A conference in honor of Roger Howe on the occasion of his 70th birthday" for the invitation to present the results exposited in this paper. We are grateful to Anne-Marie Aubert for drawing our attention to  the paper \cite{MW} and we thank her and Maarten Solleveld for useful comments. We thank Roman Bezrukavnikov for pointing out a mistake in Table \ref{t:G2-res} in a previous version of this paper. We also thank the referee for helpful suggestions.  

\

\noindent {\bf General notation.} If $\C G$ is an algebraic group, denote by $Z(\C G)$ its center. If $x\in \C G$, denote by $Z_{\C G}(x)$ the centralizer of $x$ in $\C G$ and by $Z_{\C G}(x)^0$ its identity component.  Let $\C S_{\C G}(x)=Z_{\C G}(x)/Z_{\C G}(x)^0$ denote the group of components. The same notation applies with a subset $\C J\subset \C G$ in place of $x$ for the simultaneous centralizer of all elements of $\C J$. Let $N_{\C G}(\C J)$ denote the normalizer of $\C J$ in $\C G$.

If $\C G$ is a reductive group, we call an element $x\in \C G$ {\it elliptic} if $Z_{\C G}(x)$ does not contain a nontrivial split torus.

\section{Generalities} Let ${\mathsf k}$ be a nonarchimedean local field of characteristic $0$ with ring of integers $\CO$, prime ideal $\mathfrak p$ and residue field $\CO/\mathfrak p=\mathbb F_q.$  Let $G$ be the ${\mathsf k}$-points of a connected reductive algebraic group defined over ${\mathsf k}$.  By a $G$-representation $(\pi,V)$ of $G$, we understand a smooth $G$-representation. We denote by $\C C(G)$ the category of smooth $G$-representations, and by $\C C^{\text{adm}}(G)$ the full subcategory of admissible representations. Let $R(G)$ denote the complexification of the Grothendieck group of $\C C^{\text{adm}}(G)$. By a parabolic, Levi subgroup, torus of $G$ we will always mean ${\mathsf k}$-parabolic, ${\mathsf k}$-Levi, ${\mathsf k}$-torus, etc. An element of $G$ is called compact if it is contained in a compact subgroup of $G$.

\subsection{The character distribution}
Fix a Haar measure $\mu$ on $G$. Let $\CH(G)$ denote the Hecke algebra of $G$, i.e., the associative algebra of functions $f:G\to \bC$ that are compactly supported and locally constant, with the product given by the $\mu$-convolution. The Hecke algebra $\CH(G)$ is not unital (unless $G=1$).  As it is well-known, there is an equivalence of categories between $\C C(G)$ and the category of nondegenerate $\CH(G)$-modules: if the $G$-representation is $(\pi,V)$, then the action of $\CH(G)$ is via
\[\pi(f)v=\int_G f(x)\pi(x)v~d\mu(x).
\]
Since $f\in\CH(G)$ is locally constant, there exists a compact open subgroup $K$ such that $f(k_1gk_2)=f(g)$ for all $k_1,k_2\in K$ and then $\pi(f)V\subset V^K$. If $\pi$ is admissible, this implies that every $\pi(f)$ has a well-defined trace. Define the distribution
\begin{equation}
\Theta_\pi(f)=\tr \pi(f),\quad f\in\CH(G).
\end{equation} 
Then $\Theta_\pi$ is zero on the span $[\CH(G),\CH(G)]$ of all commutators of $\CH(G).$ Thus, we may regard $\Theta_\pi$ as a functional on the cocenter $\overline\CH(G)=\CH(G)/[\CH(G),\CH(G)]$. 

Let $G_s$ denote the set of semisimple elements of $G$. For $x\in G_s$, let $D_G(x)$ denote the Harish-Chandra discriminant, i.e., the coefficient of $t^{\text{rk}G}$ in $\det((t+1)\Id-\operatorname{Ad}(x))$. Let 
\[G_{sr}=\{x\in G_s\mid D_G(x)\neq 0\}
\]
denote the set of regular semisimple elements of $G$. For example, when $G=GL(n,{\mathsf k})$, this is the set of all matrices whose eigenvalues are all distinct. Let $G_{rsc}$ denote the set of compact elements in $G_{rs}$.

\begin{theorem}[Harish-Chandra \cite{HC}]Let $\pi$ be an admissible $G$-representation. The character distribution $\Theta_\pi$ is represented on $G$ by a locally integrable function $\Theta_\pi$ which is locally constant on $G_{sr}$:
\[\Theta_\pi(f)=\int_G f(x) \Theta_\pi(x) ~d\mu(x),\quad f\in \CH(G).
\]
\end{theorem}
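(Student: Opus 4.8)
The plan is to follow Harish-Chandra's strategy: reduce everything, by induction on $\dim G$ and by conjugation-invariance, to the behaviour of $\Theta_\pi$ near the identity, and there invoke the harmonic analysis of invariant distributions on the Lie algebra $\fg$ of $G$. First I would observe that it suffices to treat irreducible $\pi$: for $f\in\CH(G)$ biinvariant under a compact open subgroup $K$, the operator $\pi(f)$ factors through the finite-dimensional space $V^K$, so $\Theta_\pi(f)$ is a finite sum of characters of the irreducible subquotients of $V^K$ as a $\CH(G,K)$-module; since the assertion can be checked against test functions of any fixed level, this reduces it to the irreducible case. The statement then splits into (i) the restriction $\Theta_\pi|_{G_{sr}}$ is a locally constant, conjugation-invariant function $\theta$; and (ii) $\theta$ extends to a function in $L^1_{\mathrm{loc}}(G)$ which represents $\Theta_\pi$ on all of $G$.

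For (i): since $G_{sr}$ is open, it suffices to work near a fixed $x_0\in G_{sr}$ with maximal torus $T=Z_G(x_0)$. Conjugation defines a submersion from $(G/T)\times (T\cap U_0)$ onto an invariant neighbourhood of $x_0$, for $U_0$ a small neighbourhood of $x_0$, so $\Theta_\pi$ near $x_0$ is pulled back from an invariant distribution on the abelian group $T$; one then shows, using admissibility (for $f$ supported near $x_0$, $\pi(f)$ lands in a fixed finite-dimensional space, so $f\mapsto\Theta_\pi(f)$ factors through a finite-dimensional quotient of $\CH(G)$), that this distribution on $T$ is locally constant. Piecing these local functions together produces $\theta$.

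For (ii): local integrability is a statement about neighbourhoods of semisimple points, since $G\setminus G_{sr}$ is $\mu$-null. Let $s\in G$ be semisimple and put $M=Z_G(s)$, a reductive group. Harish-Chandra descent expresses $\Theta_\pi$ on a neighbourhood of $s$ in terms of invariant distributions on $M$ near $1$, the comparison involving the Jacobian $D_{G/M}$, which near $s$ is a unit times $D_M$; hence local integrability of $\Theta_\pi$ near $s$ is equivalent to that of the descended distributions on $M$ near $1$. If $s$ is noncentral then $\dim M<\dim G$ and we finish by induction, so we are reduced to a neighbourhood of $1\in G$. Transferring through the exponential map (convergent on a small neighbourhood since $\mathrm{char}\,{\mathsf k}=0$), we obtain an invariant distribution near $0\in\fg$, and here one invokes the \emph{local character expansion}: Harish-Chandra's theory of the Fourier transform on $\fg$ — in particular that an invariant distribution whose support near $0$ lies in the nilpotent cone is a finite linear combination of the nilpotent orbital integrals $\mu_\CO$, together with a homogeneity argument — yields constants $c_\CO(\pi)$ with
\[
\Theta_\pi(\exp X)=\sum_{\CO}c_\CO(\pi)\,\wht{\mu_\CO}(X)\qquad\text{for }X\in\fg_{sr}\text{ near }0,
\]
where $\CO$ runs over the nilpotent $G$-orbits in $\fg$ and $\wht{\mu_\CO}$ is the Fourier transform of $\mu_\CO$. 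It then suffices that each $\wht{\mu_\CO}$ is represented by a function in $L^1_{\mathrm{loc}}(\fg)$ that is locally constant on $\fg_{sr}$ — a theorem of Harish-Chandra on orbital integrals, whose integrability part follows from the bound $|\wht{\mu_\CO}(X)|\le C\,|D_\fg(X)|^{-1/2}$ on regular semisimple $X$ near $0$ together with the local integrability of $|D_\fg|^{-1/2}$, the latter being immediate from the Weyl integration formula on $\fg$ (substituting $\phi\,|D_\fg|^{-1/2}$ for $\phi$ converts the integral into one of $|D_\fg|^{1/2}$ over the Cartan subalgebras, which converges). Feeding this through the local character expansion and running the descent in reverse gives that $\Theta_\pi$ is represented on all of $G$ by a locally integrable function, and that function is locally constant on $G_{sr}$ by (i).

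The main obstacle is the local character expansion and the ingredients it rests on, namely Harish-Chandra's full harmonic analysis on the $p$-adic Lie algebra: existence and homogeneity of the nilpotent orbital integrals, the classification of invariant distributions supported on the nilpotent cone, and the estimate $|\wht{\mu_\CO}(X)|\le C|D_\fg(X)|^{-1/2}$, each of which is itself a substantial theorem. By comparison, the descent steps, the reduction to a neighbourhood of $1$, and the passage between $\fg$ and $G$ are essentially formal once the Weyl integration formula and its Lie-algebra analogue are available.
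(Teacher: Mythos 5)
This theorem is quoted in the paper as background, with no proof given beyond the citation to Harish-Chandra's notes \cite{HC}; there is therefore no ``paper's proof'' to compare against. Your sketch is a faithful outline of the standard argument from that reference: local constancy on $G_{sr}$ via descent to a maximal torus, Harish-Chandra descent to $Z_G(s)$ and induction on dimension, passage to the Lie algebra by the exponential map, the local character expansion, and the local integrability of $\wht{\mu_\CO}$ via the bound $|\wht{\mu_\CO}|\le C|D_\fg|^{-1/2}$. The one point worth tightening is the opening reduction: splitting $\Theta_\pi$ into characters of irreducible subquotients requires $\pi$ to have \emph{finite length}, not merely to be admissible (for an admissible representation of infinite length the decomposition of $V^K$ varies with $K$ and the resulting sum of locally integrable functions need not converge); this is a standard imprecision in how the theorem is quoted, and your argument is correct once stated for finite-length (in particular irreducible) $\pi$.
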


\subsection{Parabolic induction} Let $P=MN$ be a parabolic subgroup of $G$ and $(\rho,V_\rho)$ a  representation of $M$. The (normalized) parabolically induced representation is the left $G$-regular representation on the space
\[\Ind_P^G(\rho)=\{f:G\to V_\rho\mid f(gmn)=\delta_P(mn)^{-1/2} \rho(m)^{-1} f(g),\ g\in G, m\in M, n\in N\},
\]
where $\delta_P$ is the modulus function of $P$. If $\rho$ is an admissible $M$-representation, the image of  $\Ind_P^G(\rho)$ in $R(G)$ does not depend on the choice of $P\supset M$, and therefore, we may denote by
\[i_M^G: R(M)\to R(G),
\]
the linear map defined by $\Ind_P^G.$  Let $A$ be a maximally split torus of $G$. 

\begin{theorem}[{van Dijk \cite{vD}}] Let $P=MN$ be a parabolic subgroup and let $\rho$ be an admissible $M$-representation. Let $\pi=\Ind_P^G(\rho)$ be the parabolically (normalized) induced representation. Then: 
\[\Theta_\pi(g)=\begin{cases}\sum_{w\in W(A,A_\Gamma)} \frac{|D_{M^w}(g)|^{1/2}}{|D_G(g)|^{1/2}}\cdot \Theta_{\rho^w}(g),&\text{ if }g \in G_{sr}\cap M,\\ 0, &\text{if $g$ is not conjugate to an element of $G_{sr}\cap M$},
\end{cases}
\]
where $W(A,A_\Gamma)$ is defined in \cite[page 237]{vD} and $M^w=wM w^{-1},$ $\rho^w(m)=\rho(w^{-1}mw).$
\end{theorem}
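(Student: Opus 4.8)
The plan is to compute $\Theta_\pi$ directly from the definition of parabolic induction, by realizing $\pi(f)$ as an $\End(V_\rho)$-valued integral operator, and then to pass from the resulting identity of distributions to the asserted pointwise formula on $G_{sr}$ by invoking the Harish-Chandra theorem above, applied to both $\pi$ and $\rho$. First I would fix a suitable maximal compact $K$ with $G=KP$ and realize $\Ind_P^G(\rho)$ on its compact model (functions on $K$ with values in $V_\rho$ transforming appropriately under $K\cap P$). For $f\in\CH(G)$, unfolding the Iwasawa decomposition in $\pi(f)\phi(k)=\int_G f(g)\phi(kg)\,dg$ exhibits $\pi(f)$ as an integral operator whose kernel, traced along the diagonal and transported to the compact space $G/P$, gives, for every $f$,
\[
\Theta_\pi(f)=\int_{G/P}\Theta_\rho(f_x)\,d\dot x,\qquad
f_x(m):=\delta_P(m)^{1/2}\int_N f(xmnx^{-1})\,dn,
\]
for suitably normalized measures on $G$, on $N\cong P/M$ and a $\delta_P$-quasi-invariant measure on $G/P$. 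One checks that $f_x\in\CH(M)$ (the $N$-integral converges, and $\supp f_x$ lies in the image under $P\to M$ of the compact set $x^{-1}(\supp f)x\cap P$) and that $\dot x\mapsto\Theta_\rho(f_x)$ is locally constant; the power $\delta_P(m)^{1/2}$ is precisely the one forced by the normalization of induction.

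\emph{Integrating out the unipotent radical.} By the Harish-Chandra theorem applied to $\rho$, we may write $\Theta_\rho(f_x)=\int_{M_{sr}}f_x(m)\,\Theta_\rho(m)\,dm$, with $\Theta_\rho$ now the locally constant function on $M_{sr}$ furnished by that theorem. The key geometric input is: if $m\in M$ is regular semisimple \emph{in $G$}, then $\operatorname{Ad}(m)-1$ is invertible on $\fn$, so $n\mapsto nmn^{-1}$ is an injection of $N$ with image $mN$ and constant Jacobian, whence
\[
\int_N f(xmnx^{-1})\,dn=\delta_P(m)^{-1}\bigl|\det(\operatorname{Ad}(m)-1\mid\fn)\bigr|\int_N f(xnmn^{-1}x^{-1})\,dn.
\]
Substituting this and folding $\int_{G/P}\int_N$ into $\int_{G/M}$ gives
\[
\int_G f(g)\,\Theta_\pi(g)\,dg=\int_{G/M}\int_{M_{sr}}f(ymy^{-1})\,\delta_P(m)^{-1/2}\bigl|\det(\operatorname{Ad}(m)-1\mid\fn)\bigr|\,\Theta_\rho(m)\,dm\,d\dot y,
\]
and the identity $|D_G(m)|=|D_M(m)|\,\delta_P(m)^{-1}\bigl|\det(\operatorname{Ad}(m)-1\mid\fn)\bigr|^{2}$ on $M\cap G_{sr}$ (the roots of $G$ outside $M$ split into $\fn$ and the opposite nilradical, the latter contributing a copy of the $\fn$-factor twisted by $\delta_P$) rewrites the prefactor as $|D_G(m)|^{1/2}|D_M(m)|^{-1/2}$. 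If $g\in G_{sr}$ has conjugacy class disjoint from $M$, the support of the right-hand side is a union of conjugates of $M$ and misses $g$; choosing $f$ supported near $g$ forces $\Theta_\pi(g)=0$, which is the second case.

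\emph{Weyl integration and bookkeeping.} For $g\in G_{sr}\cap M$, I would apply the Weyl integration formula on $M$ to the inner integral, decomposing $M_{sr}$ into $M$-conjugates of $T\cap G_{sr}$ over the $M$-conjugacy classes of maximal tori $T$, merge $\int_{G/M}\int_{M/T}$ into $\int_{G/T}$, and then compare term by term with the Weyl integration formula on $G$ applied to $\int_G f(g)\Theta_\pi(g)\,dg$. The factor $|D_M(t)|$ from Weyl-on-$M$ together with $|D_G(g)|^{-1}$ from Weyl-on-$G$ combine with the prefactor of the previous step to leave $|D_M(t)|^{1/2}|D_G(g)|^{-1/2}$ on each torus, and, $\Theta_\pi$ and $\Theta_\rho$ being class functions locally constant on the regular set, one reads off $\Theta_\pi(g)$. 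The step I expect to be the main obstacle is the combinatorics here: a single $G$-conjugacy class of maximal tori typically breaks into several $M$-conjugacy classes, and one must track the indices $|W_G(T)|/|W_M(T)|$ and identify these classes with van Dijk's set $W(A,A_\Gamma)$, the class labelled by $w$ contributing $|D_{M^w}(g)|^{1/2}=|D_M(w^{-1}gw)|^{1/2}$ and $\Theta_{\rho^w}(g)$ — a matching carried out via explicit coset representatives. Granting this, one obtains the displayed formula. Throughout, everything is done first as an identity of distributions, the descent to the pointwise statement being legitimate only because of the Harish-Chandra theorem, since $\Theta_\rho(m)$ is a priori defined only almost everywhere.
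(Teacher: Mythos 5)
The paper offers no proof of this theorem---it is quoted directly from van Dijk's article---so the only comparison available is with the standard argument, which is exactly what you have outlined. Your sketch is correct and is essentially van Dijk's own proof: the descent formula $\Theta_\pi(f)=\int_{G/P}\Theta_\rho(f_x)\,d\dot x$ with the $\delta_P(m)^{1/2}$ normalization, the Jacobian identity for $n\mapsto nmn^{-1}$, and the factorization $|D_G(m)|=|D_M(m)|\,\delta_P(m)^{-1}\bigl|\det(\operatorname{Ad}(m)-1\mid\fn)\bigr|^{2}$ all check out, and the remaining combinatorial matching of $M$-conjugacy classes of tori inside a $G$-class with the set $W(A,A_\Gamma)$ is precisely the bookkeeping carried out on the cited page of van Dijk.
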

This allows one to compute the character of a parabolically induced representation from the character of the inducing representation of the Levi subgroup. For example, when $G$ is split and $P=B=AN$, if $\chi$ is any smooth character of $A$ and $\pi_\chi=\Ind_B^G(\chi)$ is the corresponding minimal principal series, then
\begin{equation}
\Theta_{\pi_\chi}(g)=\begin{cases}|D_G(a)|^{-1/2}\sum_{w\in W_0} \chi^w(a),&\text{ if }g\text{ is conjugate to an element of }A\cap G_{sr},\\
0,&\text{ otherwise.}\end{cases}
\end{equation}
Here $W_0=N_G(A)/A$ is the finite Weyl group.

\subsection{Jacquet functors} If $(\pi,V)$ is a $G$-representation and $P=MN$ a parabolic subgroup, define $V(N)=\{\pi(n)v-v\mid n\in N,~v\in V\}$ and $V_N=V/V(N)$. Then $V_N$ is an $M$-representation. The Jacquet functor
\[ r_P: \ V\mapsto V_N
\]
maps admissible $G$-representations to admissible $M$-representations by the well-known result of Jacquet.
Fix a minimal parabolic subgroup $P_0=M_0N_0$ of $G$. By a standard parabolic subgroup, we mean a parabolic subgroup $P$ of $G$ such that $P\supset P_0$. A standard Levi subgroup is a Levi subgroup of a standard parabolic. Let $\C L$ denote the set of standard Levi subgroups of $G$.

 For every $M\in\C L$, denote the Jacquet functor at the level of the Grothendieck groups by
 \[ r^M_G: R(G)\to R(M),\quad V\mapsto r_P(V).\]
For the properties of $i_M^G$ and $r_G^M$ and the combinatorial relations between them, see \cite{BDK,Da}.

\subsection{Unramified characters} Let $G^0$ denote the subgroup of $G$ generated by all of the compact open subgroups of $G$. For example, when $G=GL(n,{\mathsf k})$, $G^0=\{g\in G\mid \det g\in \CO^\times\}$. The subgroup $G^0$ is normal in $G$ and the quotient $G/G^0$ is a free abelian group of finite rank. A character $\chi: G\to\bC^\times$ is called {\it unramified} if $\chi |_{G^0}=1$. Let
\begin{equation}
\Psi(G)=\Hom_\bZ(G/G^0,\bC^\times)
\end{equation}
denote the torus of unramified characters of $G$. The same definitions and notations apply to a Levi subgroup $M$. If $M$ is a Levi subgroup, denote  $d(M)=\dim\Psi(M).$ Notice that $d(G)=0$ if and only if the center of $G$ is anisotropic.

The torus $\Psi(G)$ acts on $R(G)$ by ``unramified twists": $\pi\mapsto \pi\psi,$ $\pi\in R(G)$, $\psi\in \Psi(G).$

\subsection{Orbital integrals} Let $\omega\subset G_{sr}$ be a conjugacy class. Let $x$ be a representative of $\omega$ and let $\Gamma=Z_G(x)$, a Cartan subgroup of $G$. For every $f\in \CH(G)$, define the orbital integral 
\begin{equation}
\Phi(x,f)=\int_{G/\Gamma}f(gxg^{-1}) ~dg/d\gamma,
\end{equation}
where $d\gamma$ is a Haar measure on $\Gamma$. The normalization of Haar measures is important, but we will only be interested in the vanishing of such orbital integrals, and we ignore the normalization issue here. Denote also $\Phi(\omega,f)$ in place of $\Phi(x,f).$

It is easy to see that if $f\in [\CH(G),\CH(G)]$, then $\Phi(\omega,f)=0$ for all $\omega\subset G_{sr}$. Hence, it makes sense to consider $\Phi(\omega,f)$ for $f\in\overline\CH(G)$. 

\section{Elliptic representations}
Define the space of induced representations:
\[
R(G)_\ind=\sum_{M\in\C L,~M\neq G} i_M^G(R(M)),
\]
and let the space of (virtual) elliptic representations to be the quotient
\begin{equation}
\overline R(G)_\el=R(G)/R(G)_\ind.
\end{equation}
The space $\overline R(G)_\el$ can be identified naturally with a subspace of $R(G)$ as in \cite[\S 5.5]{BDK}.  Let 
\begin{equation}
A=\frac 1p A_{d(M_0)}\circ A_{d(M_0)-1}\circ\dots\circ A_{d(G)+1}: R(G)\to R(G),
\end{equation}
(when $G$ is semisimple, $d(G)=0$) where \[A_d=\prod_{M\in \C L,d(M)=d} (i_M^G\circ r_G^M-p_M\Id),\]
 $p_M=|N_{W_G}(M)/W_M|$, $p=\prod_{M\in \C L, M\neq G} (-p_M)$, $W_M=N_M(M_0)/M_0.$ Using the combinatorics of the induction and restriction maps \cite[\S5.4]{BDK} (see also \cite[Proposition 2.5 (i)]{Da}), one sees that $\ker A=R(G)_\ind$ and therefore
 \begin{equation}
 \overline R(G)_\el\cong A(R(G)).
 \end{equation}
The action of the torus $\Psi(G)$ on $R(G)$ preserves $R(G)_\ind$ and therefore $\Psi(G)$ acts on $\overline R(G)_\el$. For every compact open subgroup $K$ of $G$, let $R(G)_K$ denote the span of all irreducible admissible $G$-representations $(\pi,V)$ such that $V^K\neq 0.$ Then $\Psi(G)$ acts on $R(G)_K$, since unramified characters are trivial of $K$. Let $\overline R(G)_{\el,K}$ denote the image of $R(G)_K$ in $\overline R(G)_\el.$

\begin{theorem}[{\cite[\S3.1]{BDK}}] For every compact open subgroup $K$, $\overline R(G)_{\el,K}$ is a finitely generated $\Psi(G)$-module. In particular, if $G$ is semisimple, then $\overline R(G)_{\el,K}$ is finite dimensional.
\end{theorem}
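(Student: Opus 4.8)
The plan is to argue through the Bernstein decomposition $\C C(G)=\bigoplus_{\fk s}\C C(G)_{\fk s}$, where $\fk s=[M,\sigma]$ runs over inertial equivalence classes of cuspidal data (a standard Levi $M\in\C L$ and a supercuspidal representation $\sigma$ of $M$, up to $G$-conjugacy and unramified twist of $\sigma$); this gives a grading $R(G)=\bigoplus_{\fk s}R(G)_{\fk s}$, where $R(G)_{\fk s}$ is spanned by the irreducibles with cuspidal support in $\fk s$. First I would record the formal reductions: $i_M^G$ and $r_G^M$ respect the grading (each sends a component to the one with the same underlying cuspidal support), so $R(G)_\ind$ is a graded subspace and $\overline R(G)_\el=\bigoplus_{\fk s}\overline R(G)_{\el,\fk s}$; the torus $\Psi(G)$ preserves each summand, since twisting by $\psi\in\Psi(G)$ merely replaces $\sigma$ by $\sigma\cdot(\psi|_M)$; since $K\subseteq G^0$, the $\Psi(G)$-action preserves the level; and only finitely many $\fk s$ contain a representation with nonzero $K$-fixed vectors. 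Hence it is enough to show, for each such $\fk s$, that the image $\overline R(G)_{\el,\fk s,K}$ of $R(G)_{\fk s,K}$ in $\overline R(G)_\el$ is a finitely generated $\Psi(G)$-module, that is, that the $\Psi(G)$-orbits of a finite subset span it over $\bC$.

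The supercuspidal case $\fk s=[G,\sigma]$ is then immediate: $R(G)_{\fk s}$ is the $\bC$-span of $\{[\sigma\psi]:\psi\in\Psi(G)\}$, the stabilizer of $[\sigma]$ in $\Psi(G)$ is finite, and $R(G)_{\fk s,K}=R(G)_{\fk s}$, so $[\sigma]$ already generates and so does its image. For $M\neq G$ I would use, as the key geometric input, the theorem of Bernstein (and Sauvageot) that the locus $U_{\fk s}\subseteq\Psi(M)$ on which $i_M^G(\sigma\psi)$ is irreducible is Zariski dense and open, with complement a proper closed subvariety (a finite union of translates of codimension-one subtori). For $\psi\in U_{\fk s}$ the class $[i_M^G(\sigma\psi)]$ is the class of a single irreducible and lies in $R(G)_\ind$, so it vanishes in $\overline R(G)_\el$; thus every irreducible $\pi\in\fk s$ whose class survives in $\overline R(G)_\el$ has cuspidal support $(M,\sigma\psi)$ with $\psi$ in the reducibility locus. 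Combining this with the analogous statement for each proper standard Levi $M'$ with $M\subseteq M'\subsetneq G$, fed into the combinatorial identities relating $i$ and $r$ (\cite[\S5.4]{BDK}; see also \cite{Da}), one should be able to cut the surviving cuspidal supports down to a finite union of cosets of the image of $\Psi(G)$ in $\Psi(M)/W_{\fk s}$, with only finitely many constituents surviving over each coset; this yields finite generation over $\Psi(G)$. The ``in particular'' statement is then automatic, since $\Psi(G)$ is trivial when $G$ is semisimple (its center is anisotropic).

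The hard part will be exactly that last step: determining which cuspidal supports, and which irreducible constituents over them, remain nonzero in the elliptic quotient of a non-supercuspidal component. This does not follow from the formal bookkeeping of the preceding sections, and making it precise is in effect the content of \cite[\S3]{BDK}. I would expect to handle it either (a) through the structure theory of $R(G)_{\fk s}$ as a module over the Bernstein center $\C Z_{\fk s}$ and over the associated (affine) Hecke algebra, together with Harish-Chandra's finiteness of square-integrable representations with bounded infinitesimal character; or (b) character-theoretically: by van Dijk's theorem above, the character of a properly induced representation vanishes on the regular semisimple elliptic elements, so the map sending $\pi$ to the restriction of $\Theta_\pi$ to that set factors through $\overline R(G)_\el$, this factored map is injective by Kazhdan's density theorem, and one bounds its image at level $K$ using that the space of regular elliptic conjugacy classes is compact modulo the split center, plus Harish-Chandra's germ expansion to control characters near the singular set. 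Either way, genuine harmonic-analytic or categorical input is unavoidable; the purely formal argument only reduces the problem to a single Bernstein component and settles the supercuspidal ones.
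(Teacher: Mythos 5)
The paper does not prove this theorem; it is stated with a citation to \cite[\S3.1]{BDK}, so there is no in-house argument to compare yours against. Your formal reductions are correct and do follow the strategy of \emph{loc.\ cit.}: the Bernstein decomposition is graded compatibly with $i_M^G$, $r_G^M$ and the $\Psi(G)$-action, only finitely many components meet level $K$, the supercuspidal components are generated by a single class, and generic irreducibility of $i_M^G(\sigma\psi)$ correctly confines the surviving cuspidal supports to the reducibility locus.

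There is, however, a genuine gap at exactly the point you flag, and it is the core of the theorem rather than a technicality. The reducibility locus in $\Psi(M)$ is a finite union of translates of codimension-one subtori; for $d(M)\ge 2$ this is far larger than a finite union of $\Psi(G)$-cosets (already for $PGL_3$ with $M$ the diagonal torus it is one-dimensional while $\Psi(G)$ is trivial). Cutting it down requires a real induction over the stratification by intermediate Levis, showing at each stage that the constituents which are not themselves properly induced occur only over finitely many $\Psi(G)$-orbits of supports and in uniformly bounded number; that induction is precisely the content of \cite[\S3.1]{BDK}, and neither of your proposed substitutes supplies it. Route (a) has the right shape (Langlands classification to reduce to tempered classes, Harish-Chandra's finiteness of discrete series modulo unramified twist, and the R-group criterion for ellipticity) but is only named, not executed. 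Route (b) is circular in this context: the injectivity of $\overline R(G)_\el$ into invariant functions on the regular elliptic set is Kazhdan's density/orthogonality statement, which is itself deduced from the Trace Paley--Wiener Theorem, whose proof in \cite{BDK} rests on \S3.1; moreover, compactness of the regular elliptic set modulo the center does not by itself bound the dimension of a space of invariant functions on it, so the appeal to germ expansions would have to be made precise. As written, the proposal reduces the theorem to its hard kernel and stops there.
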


Notice that, as a consequence of the Langlands classification, every class in $\overline R(G)_\el$ is represented by an essentially tempered module.

\subsection{Projective modules} Let $K_0(G)$ denote the complexification of the Grothendieck group of finitely generated projective $\CH(G)$-modules. A typical element in $K_0(G)$ is the compactly induced module:
\[\ind_K^G(\rho)=\CH(G)\otimes_{\CH(K)}V_\rho,\]
where $K$ is a compact open subgroup and $(\rho,V_\rho)$ is a finite dimensional $K$-representation. In fact, based on the results of Schneider-Stuhler \cite{SS}, Blanc-Brylinski \cite{BB}, and Bernstein, one sees (\cite[Corollary 4.22]{Da}) that $K_0(G)$ is spanned by the modules $\ind_K^G(\rho)$ as $K$ varies over a base of compact open subgroups of $G$ and $\rho$ varies over the irreducible smooth $K$-representations.

The algebra $\CH(G)$ is not unital, but it is a direct limit of unital associative algebras, $\CH(G)=\varinjlim_{K}\CH(G,K)$, where $K$ varies over an appropriate base of compact open subgroups (see \cite{Ka} for example), and this allows one to introduce for $\CH(G)$-modules standard techniques from homological algebra of asssociative unital algebras. 

\begin{definition}
If $P$ is a finitely-generated projective $\CH(G)$-module, there exists $n\in \bN$ and $e_p$ an idempotent in $M_n(\CH(G))$ such that $P=\CH(G)^n e_P$.  Define the \emph{Hattori-Stallings trace map} $\tau_{HS}: K_0(G)\to \overline \CH(G)$ to be
\begin{equation}
\tau_{HS}(P)=\tr e_P \text{ mod } [\CH(G),\CH(G)].
\end{equation}
This definition is independent of the choice of idempotent $e_P$. 

Define the \emph{Euler-Poincar\'e map} $\ep:R(G)\to K_0(G)$ by
\begin{equation}
\pi\mapsto \sum_{i=1}^k (-1)^i[P_i],
\end{equation}
where $0\to P_k\to \dots\to P_0\to \pi\otimes \bC[G/G^0]\to 0$ is a resolution by finitely generated projective modules. It is known that $\CH(G)$ has finite cohomological dimension \cite[Theorem 29]{Be}, \cite[Corollary II.3.3]{SS}.
\end{definition}

Every $f\in \CH(K)$, $K$ a compact open subgroup, can be extended by zero to a function in $\CH(G)$ which we will denote $\wti f$. To understand the map $\tau_{HS}$ better, notice that if $P=\ind_K^G(\rho)\in K_0(G)$, where $\rho$ is irreducible, then $P=\CH(G)  \wti e_\rho$, where $e_\rho\in \CH(K)$ is the idempotent corresponding to $\rho$. This means that 
\[\tau_{HS}(\ind_K^G(\rho))=\wti e_\rho \text{ mod } [\CH(G),\CH(G)].
\]
Consider the composition $\tau_{HS}\circ \ep: R(G)\to \overline \CH(G)$. By results of \cite{Ka,SS}, the space of induced representations $R(G)_\ind$ lies in the kernel of this composition, hence we have a map
\begin{equation}
\tau_{HS}\circ \ep: \overline R(G)_\el\longrightarrow \overline\CH(G).
\end{equation}
To describe the image of this map, we need to define the duals in $\overline\CH(G)$ of the induction and Jacquet functors. Let $R(G)^*_{\text{good}}$ denote the space of good forms defined in \cite{BDK}. By the Density Theorem  of Kazhdan \cite[Theorem 0]{Ka2} (see also \cite[Theorem B]{Ka}) and the Trace Paley-Wiener Theorem of Bernstein, Deligne, and Kazhdan \cite[Theorem 1.2]{BDK},  the trace map
\begin{equation}
\tr: \overline\CH(G)\to R(G)^*_{\text{good}},\ \tr(f)(\pi)=\Theta_\pi(f)
\end{equation}
is an isomorphism. We can use this isomorphism to define the dual maps using the trace pairing. Namely, for every standard Levi subgroup $M$, let 
\begin{equation}
\begin{aligned}
\bar i_M^G: \overline \CH(M)\to \overline \CH(G),\ \Theta_\pi(\bar i_M^G(f))=\Theta_{r^M_G(\pi)}(f),\quad \pi\in R(G),\ f\in\CH(M),\\
\bar r^M_G: \overline \CH(G)\to \overline \CH(M),\ \Theta_\pi(\bar r^M_G(f))=\Theta_{i_M^G(\pi)}(f),\quad \pi\in R(M),\ f\in\CH(G).
\end{aligned}
\end{equation}

The following results are collected in \cite{Da} and they are based on \cite{BDK,Ka2,SS,BB}.

\begin{theorem}\label{t:Dat}
\begin{enumerate}
\item {\cite[Theorem 1.6 and Corollary 4.20]{Da}} The map $\tau_{HS}: K_0(G)\to \overline \CH(G)$ is injective. Its image is the subspace
\begin{equation}
\overline\CH_c(G)=\{f\in\overline\CH(G)\mid \Phi(\omega,f)=0 \text{ for all  conjugacy classes }\omega\subset G_{sr}\setminus G_{src}\}.
\end{equation}
\item {\cite[Theorem 3.3 and Theorem 3.4]{Da}} The map $\tau_{HS}\circ \ep$ induces a linear isomorphism of $\overline R(G)_\el$ onto the \emph{elliptic cocenter}
\begin{equation}
\overline \CH(G)^\el=\bigcap_{M\in \C L, d(M)>d(G)} \ker \bar r^M_G.
\end{equation}
Moreover, $\overline \CH(G)^\el=\{f\in\overline\CH_c(G)\mid \Phi(\omega,f)=0 \text{ for all  nonelliptic conjugacy classes }\omega\subset G_{sr}\}.$

\end{enumerate}
\end{theorem}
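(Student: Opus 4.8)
The plan is to derive the statement from the cited works---it is a compilation, and the common engine is the isomorphism $\tr\colon\overline\CH(G)\to R(G)^*_{\good}$ (Density plus Trace Paley--Wiener) together with the finiteness of the cohomological dimension of $\CH(G)$, which is what makes $\ep$ and $\tau_{HS}$ well defined on all of the Grothendieck groups. I would repeatedly use the identity $\Theta_\pi(\tau_{HS}(P))=\dim\Hom_G(P,\pi)$ for $P$ a finitely generated projective (write $P=\CH(G)^n e_P$ and take matrix traces), so that $\Theta_\pi\big(\tau_{HS}(\ep(\sigma))\big)=\sum_i(-1)^i\dim\Ext^i_G(\sigma\otimes\bC[G/G^0],\pi)$. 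In other words, $\tr\circ\tau_{HS}\circ\ep$ is, up to the (trivial, when $G=G^0$, e.g.\ $G$ simply connected) twist by $\bC[G/G^0]$, the map $\sigma\mapsto\EP(\sigma,-)$; this is the principle I would organize both parts around.

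For part (1), I would first prove injectivity of $\tau_{HS}$: the classes $[\ind_K^G(\rho)]$ span $K_0(G)$, for them $\tau_{HS}(\ind_K^G(\rho))=\wti e_\rho$ with $\Theta_\pi(\wti e_\rho)=\dim\Hom_K(\rho,\pi|_K)$, and the resulting pairing of $K_0(G)$ with $R(G)$ is nondegenerate on the $K_0$-side (a form of the Density theorem; one checks it on each Bernstein component), so $\ker\tau_{HS}=0$. One inclusion for the image is immediate: $\wti e_\rho$ is supported on the compact set $K$, so its orbital integrals over any $\omega\subset G_{sr}\setminus G_{src}$ vanish; since these vanishing conditions \emph{define} $\overline\CH_c(G)$, we get $\im\tau_{HS}\subseteq\overline\CH_c(G)$. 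The reverse inclusion---realizing every $f\in\overline\CH(G)$ with orbital integrals vanishing off the compact locus as $\tau_{HS}$ of a virtual finitely generated projective---is the substantive point of part (1); here I would invoke the localization argument of Blanc--Brylinski and Schneider--Stuhler in the form packaged in \cite{Da}, which uses coefficient systems on the Bruhat--Tits building to produce the required module.

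For part (2), I would take as given (noted above the theorem, from \cite{Ka,SS}) that $R(G)_\ind\subseteq\ker(\tau_{HS}\circ\ep)$, so the map descends to $\overline R(G)_\el$. Unwinding the definitions of the transpose maps shows $\bar r^M_G f=0$ iff $\tr f$ vanishes on $i_M^G(R(M))$, hence $\overline\CH(G)^\el=\bigcap_{d(M)>d(G)}\ker\bar r^M_G$ is exactly $\{f\in\overline\CH(G)\mid\tr f\ \text{vanishes on}\ R(G)_\ind\}$, i.e.\ $\tr^{-1}$ of the good forms that factor through $\overline R(G)_\el$. I would then feed in the two geometric facts of \cite{Ka,SS} (reflecting the homology of $\CH(G)$ at infinity): the Euler--Poincar\'e pairing vanishes whenever one of its arguments lies in $R(G)_\ind$, and the induced pairing on $\overline R(G)_\el$ is nondegenerate. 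Since $\tr\circ\tau_{HS}\circ\ep$ is (the twist of) $\sigma\mapsto\EP(\sigma,-)$ and $\overline R(G)_\el$ is finite dimensional for $G$ semisimple, the first fact gives that the image lands in $\overline\CH(G)^\el$ and the second gives both injectivity on $\overline R(G)_\el$ and surjectivity onto $\overline\CH(G)^\el$; this is the isomorphism claimed. Finally, for the orbital-integral description: $\bar r^M_G$ is dual to $i_M^G$, and by van Dijk's character formula together with the Weyl integration formula it agrees, up to explicit Jacobians, with the constant-term (parabolic descent) map $\CH(G)\to\CH(M)$; hence $\bar r^M_G f=0$ forces the orbital integrals of $f$ to vanish along every regular semisimple class meeting $M$, and as $M$ runs over the proper standard Levis with $d(M)>d(G)$ these classes exhaust the nonelliptic regular semisimple locus. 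Combining with part (1) yields $\overline\CH(G)^\el=\{f\in\overline\CH_c(G)\mid\Phi(\omega,f)=0\ \text{for all nonelliptic}\ \omega\subset G_{sr}\}$.

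The main obstacle, in both parts, is the same surjectivity/nondegeneracy phenomenon of a ``Riemann--Roch'' flavour: in (1) one must manufacture enough finitely generated projective modules to realize a prescribed support pattern of orbital integrals, and in (2) one must know that the Euler--Poincar\'e pairing is nondegenerate modulo $R(G)_\ind$, i.e.\ that $\ker(\tau_{HS}\circ\ep)$ is no bigger than $R(G)_\ind$. These are exactly the points where the geometry---Schneider--Stuhler's coefficient systems on the building, the $K$-theoretic computations of Blanc--Brylinski, and the homological behaviour of $\CH(G)$ ``at infinity''---does the real work; by contrast the formal manipulations with $i_M^G$, $r^M_G$, their transposes $\bar i_M^G$, $\bar r^M_G$, van Dijk's formula and the Weyl integration formula are routine once those inputs are available.
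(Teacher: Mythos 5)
The paper offers no proof of this theorem: it is quoted verbatim from Dat \cite{Da} (building on \cite{BDK,Ka2,SS,BB}), so there is nothing internal to compare against. Your sketch is a faithful reconstruction of how those sources argue --- the pairing $\Theta_\pi(\tau_{HS}(P))=\dim\Hom_G(P,\pi)$, the identification of $\tr\circ\tau_{HS}\circ\ep$ with $\EP(\sigma,-)$, the duality between $\bar r^M_G$ and $i_M^G$, and van Dijk plus Weyl integration for the orbital-integral description --- and you correctly isolate the two genuinely hard inputs (surjectivity onto $\overline\CH_c(G)$ via coefficient systems on the building, and nondegeneracy of $\EP$ modulo $R(G)_\ind$) as the points that must be imported rather than rederived.
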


In light of Theorem \ref{t:Dat}, we make the following definition.

\begin{definition}
Define the \emph{pseudo-coefficient map} $\ps: \overline R(G)_\el\to \overline \CH^\el$ to be the isomorphism from Theorem \ref{t:Dat}(2). For every class $[\pi]\in \overline R(G)_\el$, let $f_\pi=\ps(\pi)$ and call it the \emph{Euler-Poincar\' e function} of $\pi$. 
\end{definition}
It is easy to see from the definitions that
\begin{equation}
\Theta_{\pi'}(f_{\pi})=\EP(\pi,\pi'), \text{ where }\EP(\pi,\pi')=\sum_{i\ge 0}(-1)^i\dim \Ext_{\CH(G)}^i(\pi,\pi').
\end{equation}
The sum in the right hand side is finite since $\CH(G)$ has finite cohomological dimension, as mentioned before, and $\dim \Ext_{\CH(G)}^i(\pi,\pi')<\infty$ for all smooth admissible representations $\pi,\pi'$.

\subsection{Euler-Poincar\' e functions} We recall an explicit description of $f_\pi$ from \cite{SS}. For the applications in this paper, it is sufficient to assume that $G$ is semisimple and split, which, for brevity, we assume now. Let $A$ be a maximal split torus and $B=AN$ a Borel subgroup of $G$. Let  $R$ be the set of roots of $A$ in $G$ and let $\Pi$ be the set of simple roots of $A$ in $G$ with respect to $B$. Let 
\begin{equation}
W_0=N_G(A)/A \text{ and } W=N_G(A)/A(\CO)
\end{equation}
be the finite Weyl group and the Iwahori-Weyl group, respectively. Let $W\longrightarrow W_0$ be the natural projection whose kernel is the lattice $X=A/A(\CO)\cong \Hom({\mathsf k}^\times, A)$. Let $Y=\Hom(A,{\mathsf k}^\times)$ be the character lattice. For every $x\in X,\ y\in Y$, the composition $y\circ x: {\mathsf k}^\times\to {\mathsf k}^\times$ is an algebraic homomorphism, hence it is given by $z\mapsto z^n$ for some $n\in\bZ$. Denote $\langle x,y\rangle:=n$; this defines a perfect pairing between $X$ and $Y$. Let $R^\vee\subset X$ be the set of coroots in duality $\al\leftrightarrow \al^\vee$, $\langle\al,\al^\vee\rangle=2$ with the roots $R$.

Identify $W=W_0\ltimes X$.  A typical element of $W$ is $w t_x$, where $w\in W_0$ and $x\in X$ and the product is $(w_1 t_{x_1})\cdot (w_2 t_{x_2})=w_1w_2 t_{w_2^{-1}(x_1)+x_2}.$  The group $W$ acts naturally on $Y\times \bZ$, where $Y=\Hom(A,{\mathsf k}^\times)$, via:
\[wt_x: (y,k)\mapsto (w(y),k-\langle x,y\rangle).
\]
Define the positive and negative affine roots:
\begin{equation*}
\begin{aligned}
R_{a}^+&=\{(\al,k)\mid \al\in R,~k>0\}\cup \{(\al,0)\mid \al\in R^+\}\\
R_a^-&=\{(\al,k)\mid \al\in R,~k<0\}\cup \{(\al,0)\mid \al\in R^-\},
\end{aligned}
\end{equation*}
and set $R_a=R_a^+\cup R_a^-=R\times \bZ.$ This is a $W$-stable subset of $Y\times\bZ$. The simple affine roots are 
\[\Pi_a=\{(\al,0)\mid \al\in\Pi\}\cup\{(\al,1)\mid \al\in \Pi_m\}\subset R_a^+,\]
where $\Pi_m=\{\beta\in R\mid \beta\text{ minimal with respect to }\le\}$ where $\beta_1\le\beta_2$ if, by definition, $\beta_2-\beta_1$ is a nonnegative integral combination of roots in $\Pi.$

The group $W$ acts on the space $E=\bR\otimes_\bZ X$ via 
\[ wt_x: e\mapsto w(e+x). 
\]
Let $W^a=W_0\ltimes \bZ R^\vee$ be the affine Weyl group. It is generated by the reflections corresponding to the simple affine roots. 

Assume for simplicity that $G$ is simple. Then we may regard $E$ as a simplicial complex with hyperplanes corresponding to the affine reflections, then $W^a$ acts transitively on the open facets of $E$. If $c\subset E$ is a facet, denote by $W_c$ the stabilizer of $c$ in $W$. This is a finite group. 

There exists a unique open facet $c_0$ (the fundamental alcove) which contains $0$ in its closure and such that all simple roots $\al\in\Pi$ take positive values on $c_0$. The reflections in the hyperplanes that border $c_0$ are the simple affine reflections corresponding to the roots in $\Pi_a$.

 Let $\Omega=W_{c_0}$. Then \[W=\Omega\ltimes W^a.\]
For the definition of the building $\C B_G$ of $G$, see \cite{SS,Ti} for example. This is a simplicial complex containing $E$ as a subcomplex and such that $G$ acts on $\C B_G$ via simplicial maps. Every facet of $\C B_G$ can be translated into $E$ by the action of $G$. Define
\begin{equation}
P_c^+=\text{ stabilizer in }G\text{ of the facet }c\in\C B_G.
\end{equation}
This group sits into a short exact sequence
\[1\longrightarrow U_c\longrightarrow P_c^+\longrightarrow M_c^+\longrightarrow 1,\]
where $U_c$ is the prounipotent radical and $M_c^+$ is the $\mathbb F_q$-points of a possibly disconnected reductive group over $k$. Let $M_c=M_c^{+,0}$ be the $\mathbb F_q$-points of the identity component of $M_c^+$. Define $P_c$ to be the inverse image in $P_c^+$ of $M_c$. This is called a parahoric subgroup of $G$; it is a compact open subgroup.

Let $I=P_{c_0}$, an Iwahori subgroup. Suppose $c$ is a facet of $\C B_G$ such that $c\subset \overline c_0$. Then $P_c\supset I$ is a standard parahoric subgroup. We have $P_c^+/P_c\cong \Omega_c$, where $\Omega_c$ is the stabilizer of $c$ in $\Omega.$ 
The assignment
\begin{equation}
J\subsetneq \Pi_a \longrightarrow P_J:=P_{c_J}
\end{equation}
gives a one-to-one correspondence between $\Omega$-orbits of proper subsets in $\Pi_a$ and $G$-conjugacy classes of parahoric subgroups. Here $c_J$ is a the facet contained in the closure of $c_0$ and defined by the hyperplanes corresponding to the roots in $J$:
\[P_J=\langle I, I{s_\al} I: \al\in J\rangle.\]
At one extreme, $P_{\emptyset}=I$ and at the other, if  $J$ is maximal, then $P_J$ is the stabilizer of a vertex of $\overline c_0$ and it is a maximal parahoric subgroup of $G$.

Using an explicit resolution for admissible representations $\pi$ in terms of projective modules that are compactly induced from parahoric subgroups, Schneider and Stuhler obtain the following explicit description of the functions $f_\pi$.

\begin{theorem}[{\cite[Theorem III.4.20]{SS}}]
Let $(\pi,V)$ be an admissible representation of $G$. Set 
\begin{equation}\label{e:f-pi}
F_\pi=\sum_{J\subsetneq \Pi_a/\Omega} (-1)^{|J|} \frac{\wti\epsilon_J \wti\chi_J}{\mu(P_J^+)},
\end{equation}
where $\chi_J$ is the character of the $P_J^+$-module $V^{U_J}$ (this is zero or finite dimensional),  $\epsilon_J: P_J^+\to\{\pm 1\}$ is the orientation character, and $\wti\epsilon_J,\wti\chi_J$ denote the extension by zero. Then $f_\pi=F_\pi$ mod $[\CH(G),\CH(G)]$.
\end{theorem}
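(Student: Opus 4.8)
The plan is to feed the Schneider--Stuhler ``fixed-point'' resolution of $(\pi,V)$ into the Hattori--Stallings trace $\tau_{HS}$, compute term by term, and identify the answer with $f_\pi=\ps(\pi)$. First I would recall from \cite{SS} the coefficient system on the Bruhat--Tits building $\C B_G$ attached to $V$: to a facet $F$ it assigns the finite-dimensional space $V^{U_F}$ (finite-dimensional by admissibility, as $U_F$ is a compact open subgroup of the compact open group $P_F^+$), with the natural maps $V^{U_F}\to V^{U_{F'}}$ for $F'$ a face of $F$ (the $U_{F'}$-projection) and the orientation twist $\epsilon_F$ of $P_F^+$. Its augmented oriented chain complex
\[
0\longrightarrow C_{d}\longrightarrow\cdots\longrightarrow C_1\longrightarrow C_0\longrightarrow V\longrightarrow 0,\qquad C_q=\bigoplus_{F:\,\dim F=q}\ind_{P_F^+}^G\bigl(\epsilon_F\otimes V^{U_F}\bigr),
\]
with $d=\dim\C B_G$ and the inner sum over $G$-orbit representatives, is the exact complex I want; each $C_q$ is finitely generated projective, being a finite direct sum of modules compactly induced from compact open subgroups. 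Using the bijection (recalled in the excerpt) between $G$-conjugacy classes of parahorics and $\Omega$-orbits of proper subsets $J\subsetneq\Pi_a$, under which $P_J^+=P_{c_J}^+$ and $\dim c_J=d-|J|$, I would reindex these terms by $J$. (In positive depth one shrinks the $U_F$ to deeper congruence subgroups; for the Iwahori-level representations of interest here the parahoric version suffices.)

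Next I would apply $\tau_{HS}$ termwise. By additivity and the computation recalled just before the theorem, $\tau_{HS}(\ind_K^G\rho)=\wti{e_\rho}$ modulo $[\CH(G),\CH(G)]$ for a compact open $K$ and a finite-dimensional $\rho$; rewriting a primitive idempotent $e_\rho$ of the $\rho$-block of $\CH(K)$ through the block's central idempotent (they differ modulo commutators by the factor $\dim\rho$) gives $\tau_{HS}(\ind_K^G\rho)\equiv\mu(K)^{-1}\,\wti{\check\chi_\rho}$, with $\check\chi_\rho$ the character of the contragredient. Summing over the irreducible constituents of the $P_J^+$-module $\epsilon_J\otimes V^{U_J}$ then yields
\[
\tau_{HS}\Bigl(\ind_{P_J^+}^G\bigl(\epsilon_J\otimes V^{U_J}\bigr)\Bigr)\equiv\frac{\wti\epsilon_J\,\wti\chi_J}{\mu(P_J^+)}\pmod{[\CH(G),\CH(G)]},
\]
once the contragredient is absorbed into the conventions for $\chi_J$ (it plays no role for the self-dual characters of the finite reductive quotients $M_J^+$ occurring here).

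To finish, the cleanest route observes that both $F_\pi$ and $f_\pi$ lie in $\overline\CH(G)$: for every admissible $\pi'$, Frobenius reciprocity and the projectivity of the $C_q$ give $\Theta_{\pi'}(F_\pi)=\sum_q(-1)^q\dim\Hom_G(C_q,\pi')=\sum_i(-1)^i\dim\Ext^i_{\CH(G)}(\pi,\pi')=\EP(\pi,\pi')=\Theta_{\pi'}(f_\pi)$, and since $\tr\colon\overline\CH(G)\to R(G)^*_{\text{good}}$ is an isomorphism, $F_\pi=f_\pi$; equivalently, the resolution directly exhibits $\ep(\pi)=\sum_q(-1)^q[C_q]$ in $K_0(G)$ (the twist by $\bC[G/G^0]$ in the definition of $\ep$ being harmless for $G$ semisimple), so $f_\pi=\tau_{HS}(\ep(\pi))=\sum_q(-1)^q\tau_{HS}(C_q)$. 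Either way, inserting the formula of the previous step and reindexing the signs by $J$ gives $f_\pi\equiv\sum_{J\subsetneq\Pi_a/\Omega}(-1)^{|J|}\tfrac{\wti\epsilon_J\wti\chi_J}{\mu(P_J^+)}=F_\pi$, as claimed.

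I expect the main obstacle to be the input I would borrow wholesale: the existence and exactness of the Schneider--Stuhler resolution — that the chain complex of the fixed-point coefficient system is acyclic and augments onto $V$ — which is the technical heart of \cite{SS} and rests on their sheaf-theoretic analysis on the contractible building together with a local acyclicity statement at each facet. Granting that, the remainder is formal but delicate bookkeeping: the orientation characters $\epsilon_F$, the contragredient in $\tau_{HS}$, the $\bC[G/G^0]$-twist in $\ep$, and — the fiddliest point — the global sign $(-1)^{\dim\C B_G}$ stemming from the dimension-versus-codimension grading of the resolution, which must be shown to collapse so that the alternating sum really reads $(-1)^{|J|}$ as in $F_\pi$.
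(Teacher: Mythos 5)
The paper offers no proof of this statement---it is quoted from Schneider--Stuhler \cite{SS}---so there is nothing internal to compare your argument against; what you have written is essentially the argument of \cite{SS} (in the form organized by Dat \cite{Da}): feed the fixed-point resolution into $\tau_{HS}$, evaluate $\tau_{HS}\bigl(\ind_{P_F^+}^G(\epsilon_F\otimes V^{U_F})\bigr)$ termwise, and identify the alternating sum with $\ps(\pi)$ either through the trace isomorphism onto $R(G)^*_{\text{good}}$ or directly through $\ep$. That part of your sketch is sound, and the one serious input---exactness of the Schneider--Stuhler complex---is correctly isolated and legitimately cited.

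Your two flagged loose ends are, however, exactly where the care is needed, and the global sign does \emph{not} simply collapse. With the resolution graded by facet dimension, the sign attached to $J$ is $(-1)^{\dim c_J}=(-1)^{\rank G-|J|}$, which equals $(-1)^{|J|}$ only in even rank. A sanity check with $G=SL_2$ and $\pi=\St$ (where $V^{U_\emptyset}$ is the trivial character of $T(\bF_q)$ and $V^{U_{\{\al_i\}}}$ is the finite Steinberg at each vertex) gives $\Theta_{\St}(F_{\St})=1-1-1=-1$ with the exponent $|J|$, whereas the pseudocoefficient property requires $\EP(\St,\St)=+1$; the exponent $\dim c_J$ gives $-1+1+1=+1$. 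So the formula as transcribed here carries a factor $(-1)^{\rank G}$, invisible for $G_2$ (the only case used later) but present in odd rank; in your write-up you should simply keep $(-1)^{\dim c_J}$ throughout rather than try to show it ``collapses.'' The contragredient is a similar genuine point: $\tau_{HS}$ produces $g\mapsto\chi_J(g^{-1})$, and since the theorem is asserted for an arbitrary admissible $\pi$ (where $V^{U_J}$ need not be self-dual, nor unipotent), you cannot discard it by appealing to self-duality of the constituents; it has to be absorbed into the convention defining $\chi_J$. Neither issue disturbs the architecture of your argument, but both must be resolved, not deferred, for the identity $f_\pi=F_\pi$ to hold on the nose.
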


The relevance of the functions $F_\pi$ is that they give an explicit incarnation of the {\it pseudo-coefficients} of square integrable representations $\pi\in \overline R(G)_\el$. 

\begin{theorem}[{\cite[Theorem III.4.6]{SS}, \cite[Corollary p. 29]{Ka2}}]
Suppose that $\pi$ is an irreducible square integrable representation of $G$ and that $\pi'$ is an irreducible tempered $G$-representation. Then
\[\Theta_{\pi'}(f_\pi)=\begin{cases}1,& \pi'\cong\pi,\\
0,&\pi'\not\cong \pi.
\end{cases}
\]
The Euler-Poincar\' e pairing $\EP$ is nondegenerate on $\overline R(G)_\el$ and the set of irreducible square integrable representations form an orthonormal set with respect to $\EP$. 
\end{theorem}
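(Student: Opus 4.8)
The statement to be proved is the pair of assertions in the last theorem: (i) for $\pi$ irreducible square integrable and $\pi'$ irreducible tempered, $\Theta_{\pi'}(f_\pi) = \delta_{\pi,\pi'}$; and (ii) $\EP$ is nondegenerate on $\overline R(G)_\el$ with the irreducible square integrable representations forming an orthonormal set. The plan is to deduce (i) directly from the homological identity $\Theta_{\pi'}(f_\pi) = \EP(\pi,\pi')$ already recorded in the excerpt, together with a vanishing statement for higher Ext groups between tempered modules, and then to read off (ii) as a formal consequence.

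\textbf{Step 1: Reduce to an Ext computation.} By the formula $\Theta_{\pi'}(f_\pi) = \EP(\pi,\pi') = \sum_{i\ge 0}(-1)^i\dim\Ext^i_{\CH(G)}(\pi,\pi')$ (valid because $\CH(G)$ has finite cohomological dimension and the Ext groups are finite-dimensional), it suffices to show that when $\pi$ is irreducible square integrable and $\pi'$ irreducible tempered, $\Ext^i_{\CH(G)}(\pi,\pi') = 0$ for $i>0$, and $\Ext^0_{\CH(G)}(\pi,\pi') = \Hom_G(\pi,\pi')$ is $\bC$ if $\pi\cong\pi'$ and $0$ otherwise (the latter being Schur's lemma plus irreducibility). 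So the content is the higher vanishing.

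\textbf{Step 2: Higher Ext vanishing from temperedness.} The key input is that for square integrable $\pi$, the projective resolution of $\pi\otimes\bC[G/G^0]$ by compactly induced parahoric modules used by Schneider--Stuhler is, after applying $\Hom_G(-,\pi')$, concentrated in degree $0$ when $\pi'$ is tempered. Concretely, $\Ext^i_{\CH(G)}(\pi,\pi')$ is computed by the cohomology of the complex obtained by applying $V'^{U_J}$ (invariants under the prounipotent radicals $U_J$ of the parahoric subgroups $P_J$) to the Schneider--Stuhler coefficient system attached to $\pi$, paired against $\pi'$; this is exactly the setup behind Theorem III.4.20 cited above. For $\pi$ square integrable the relevant ``duality'' property (cf. \cite[\S III.4]{SS}) forces this complex to have cohomology only in the top or bottom degree, and temperedness of $\pi'$ — equivalently, the Casselman square-integrability/temperedness criteria on the Jacquet modules — kills all but the degree-$0$ contribution. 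Alternatively, one invokes directly \cite[Corollary p.~29]{Ka2}: Kazhdan's construction of pseudocoefficients shows $\Theta_{\pi'}(f_\pi)$ equals the "elliptic inner product" of the characters $\Theta_\pi$ and $\Theta_{\pi'}$ on $G_\el$, and for $\pi$ square integrable and $\pi'$ tempered irreducible this orbital-integral inner product is $\delta_{\pi,\pi'}$ by orthogonality of square-integrable characters (Harish-Chandra). Either route establishes (i).

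\textbf{Step 3: From (i) to (ii).} Nondegeneracy of $\EP$ on $\overline R(G)_\el$: by Theorem \ref{t:Dat}(2), $\ps$ identifies $\overline R(G)_\el$ with $\overline\CH(G)^\el$, and $\EP(\pi,\pi') = \Theta_{\pi'}(f_\pi)$; the trace pairing $\overline\CH(G)\to R(G)^*_\good$ being an isomorphism (Trace Paley--Wiener plus Density) means that if $\EP(\pi,\pi')=0$ for all $\pi'$ then $f_\pi=0$ in $\overline\CH(G)$, hence $\pi=0$ in $\overline R(G)_\el$ since $\ps$ is an isomorphism; thus $\EP$ is nondegenerate. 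For orthonormality: by the Langlands classification every elliptic class is represented by essentially tempered modules, and the square integrable ones among them map to linearly independent elements of $\overline R(G)_\el$ (they cannot be parabolically induced); applying (i) with $\pi,\pi'$ both square integrable gives $\EP(\pi,\pi')=\delta_{\pi,\pi'}$, which is exactly the orthonormality assertion.

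\textbf{Main obstacle.} The delicate point is Step 2: identifying $\EP(\pi,\pi')$ with the elliptic orbital-integral pairing of characters and proving the higher Ext groups vanish for $\pi$ square integrable and $\pi'$ tempered. This requires either the full force of the Schneider--Stuhler coefficient-system machinery together with Casselman's temperedness criterion, or Kazhdan's density-theorem argument reducing the claim to Harish-Chandra's orthogonality relations for discrete series characters. Everything else — the reduction in Step 1 and the formal deductions in Step 3 — is bookkeeping with results already in hand.
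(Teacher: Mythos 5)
The paper does not prove this statement: it is quoted verbatim from \cite[Theorem III.4.6]{SS} and \cite[Corollary p.~29]{Ka2}, so there is no internal proof to compare against. Your sketch is a reasonable reconstruction of the standard arguments. Your ``route (b)'' in Step 2 --- $f_\pi$ has orbital integrals matching $\Theta_\pi$ on the elliptic set, the Weyl integration formula turns $\Theta_{\pi'}(f_\pi)$ into the elliptic inner product of characters, and orthogonality of a discrete series character against any inequivalent irreducible tempered character finishes it --- is essentially the content of the cited references, and you correctly identify that the analytic input (the Hopf--Lefschetz-type identity of Schneider--Stuhler, or equivalently Kazhdan/Arthur's elliptic pairing formula) is being invoked rather than proved. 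Your ``route (a)'', the vanishing of $\Ext^i_{\CH(G)}(\pi,\pi')$ for $i>0$, is genuinely stronger than the theorem (which only needs the alternating sum) and is not how the cited sources argue; the paper itself records it separately as Meyer's later result \cite[Theorem 41]{Mey}, and your justification of it (``duality forces concentration in top or bottom degree'') is too vague to count as a proof, though you flag this honestly. Step 3 is sound: note in particular that your nondegeneracy argument via the isomorphisms $\ps$ and $\tr$ is the right one, since orthonormality of the square integrables alone would not suffice (they need not span $\overline R(G)_\el$); the only point worth making explicit there is that $f_\pi\in\overline\CH(G)^\el$ already pairs to zero with all properly induced characters, so vanishing of $\EP(\pi,\cdot)$ on $\overline R(G)_\el$ does force $f_\pi=0$ in $\overline\CH(G)$.
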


In fact, more is true: with the notation as in the previous theorem, $\Ext_{\CH(G)}^i(\pi,\pi')=0$ if $i>0$ \cite[Theorem 41]{Mey}. This result has been extended in \cite[Theorem 3]{OS2} to a complete determination of $\Ext^i_{\CH(G)}(\pi,\pi')$ for all irreducible tempered modules, in terms of the Knapp-Stein
theory of standard intertwiners and analytic R-groups.

\section{Unipotent representations}\label{s:unipotent}

\subsection{Weyl groups} Let $\C W$ be a finite Weyl group acting on the $n$-dimensional reflection representation $V$. Let $R(\C W)$ be the complexification of the Grothendieck group of finite dimensional $\C W$-representations. It can be naturally identified with the space of complex class functions on $\C W$. Let $\langle~,~\rangle_{\C W}$ be the character pairing of $\C W$ on $R(\C W)$. Let $\Irr~\C W$ be the set of irreducible characters of $\C W$.

Define $\bigwedge^{-} V=\sum_{i=0}^n (-1)^i\bigwedge^i V$, viewed as a virtual $\C W$-representation. Its character is $(\tr \bigwedge^{-} V) (w)= \det_V(1-w)$. Call an element $w\in \C W$ {\it elliptic} if $\det_V(1-w)\neq 0$. It is easy to see that $w$ is elliptic if and only if $w\notin \C W_J$ for every proper parabolic subgroup $\C W_J\subset \C W.$ Define the hermitian pairing
\begin{equation}
\langle~,~\rangle_{\C W}^\el: R(\C W)\times R(\C W)\to \bC,\quad \langle\sigma,\sigma'\rangle_{\C W}^\el=\langle\sigma,\sigma'\otimes {\bigwedge}^{-}V\rangle_{\C W},
\end{equation}
and call it the elliptic pairing of $\C W$. The radical of $\langle~,~\rangle_{\C W}^\el$ is $R(\C W)_\ind$ \cite[Proposition 2.2.2]{Re}, the span of induced representations from proper parabolic subgroups. Let $\overline R(\C W)_\el=R(\C W)/R(\C W)_\ind$ be the space of elliptic representation. It may be naturally identified with the space of class functions on $W$ supported on the elliptic conjugacy classes of $W$.

\subsection{Finite groups}\label{s:finite-groups} Let $\C G$ be a connected semisimple algebraic $\overline {\mathbb F}_q$-group defined over ${\mathbb F}_q$ with Frobenius morphism $F: \C G\to \C G$. Let $\C G^F$ be the fixed points of $F$, a finite group of Lie type. We assume that $\C G$ is adjoint and $\C G^F$ is split. As before, let $R(\C G^F)$ denote the complex class functions on $\C G^F$ with $\langle~,~\rangle_{\C G^F}$ the character pairing. Let $\Irr ~\C G^F$ denote the set of characters of irreducible representations of $\C G^F$.

To each pair $(T,\theta)$, $T$ an $F$-stable maximal torus of $\C G$ and $\theta: T^F\to \bC^\times$ a character, Deligne and Lusztig \cite{DL} associated a {\it generalized character} $R_{T,\theta}\in R(\C G^F)$. 

\begin{definition} A character $\rho\in \Irr ~\C G^F$ is called \emph{unipotent} if $\langle\rho, R_{T,1}\rangle_{\C G^F}\neq 0$ for some $F$-stable maximal torus $T$. Let $\Irr_u\C G^F$ denote the set of irreducible unipotent characters and $R_u(\C G^F)\subset R(\C G^F)$ their span. 

A character $\rho\in \Irr_u\C G^F$ is called \emph{cuspidal} if $\langle\rho, R_{T,\theta}\rangle_{\C G^F}=0$ for all maximal tori $T$ contained in a proper $F$-Levi subgroup.
\end{definition}

Let $T$ be an $F$-stable maximal torus of $\C G$ and suppose that $T$ is contained in an $F$-stable parabolic subgroup $P=M U_P$ with $M$ an $F$-stable Levi subgroup. By \cite[Proposition 2.6]{Lu-cbms}, if $\theta$ is a character of $T^F$, then
\begin{equation}\label{RT-ind}
R_{T,\theta}=\Ind_{P^F}^{\C G^F}(R_{T,\theta}^M),
\end{equation}
where $R_{T,\theta}^M$ is the generalized character of $M^F$, regarded as a virtual representation of $P^F$, trivial on $U_P^F$.  

By \cite[Corollary 3.2]{Lu-cbms}, if $\rho\in \Irr_u\C G^F$ and $\theta'$ is a nontrivial character of $T'^F$, where 
$T'$ is an $F$-stable maximal torus of $\C G$, then
\begin{equation}\label{non-mix}
\langle \rho, R_{T',\theta'}\rangle_{\C G^F}=0.
\end{equation}
Combining (\ref{RT-ind}) and (\ref{non-mix}), it follows that the functors of parabolic induction and restriction take unipotent representations to unipotent representations.

\medskip

Fix a maximal $F$-stable torus $T_0$ contained in an $F$-stable Borel subgroup $B_0$, and let $\C W=N_{\C G^F}(T_0)/T_0$ be the finite Weyl group. As it is well known, the irreducible representations occurring in the minimal principal series $\Ind_{B_0^F}^{\C G^F}(1)$ (which equals $R_{T_0,1}$ by (\ref{RT-ind})) are in one-to-one correspondence with the irreducible representations of $\C W$. If $\mu\in \Irr~\C W$, denote by $\rho_\mu\in \Irr_u \C G^{F}$ the corresponding constituent of the minimal principal series.

Lusztig \cite{orange} partitioned $\Irr~\C W$ into families $\C F$ and attached to each family $\C F$ a finite group $\Gamma_{\C F}$. He defined the sets
\begin{equation}
M(\Gamma_{\C F})=\Gamma_{\C F}\text{-orbits on } \{(x,\sigma)\mid x\in \Gamma_{\C F},\ \sigma\in \Irr~Z_{\Gamma_{\C F}}(x)\}.
\end{equation}

\begin{definition}
The \emph{nonabelian Fourier transform} is the pairing $\{~,~\}: M(\Gamma_{\C F})\times M(\Gamma_{\C F})\to \bC$ given by
\begin{equation}
\{(x,\sigma),(y,\tau)\}=\frac 1{|Z_{\Gamma_{\C F}}(x)||Z_{\Gamma_{\C F}}(y)|}\sum_{g\in\Gamma_{\C F},~xgyg^{-1}=gyg^{-1}x}\sigma(gyg^{-1})~\overline{\tau(g^{-1}xg)}.
\end{equation}
Set $\C X(\C W)=\bigsqcup_{\C F\subset \Irr~\C W}M(\Gamma_{\C F})$ and extend $\{~,~\}$ to $\C X(\C W).$
\end{definition}

On the other hand, to every $\mu\in \Irr~\C W$, Lusztig \cite{orange} associates an \emph{almost character} $R_\mu$ as follows. For every $w\in \C W$, choose an $F$-stable representative $\dot w\in \C G$ and $g\in \C G$ such that $g^{-1}F(g)=\dot w$. Define $T_w=gT_0g^{-1}$, an $F$-stable maximal torus. Define
\begin{equation}
R_\mu=\frac 1{|W|} \sum_{w\in W} \mu(w) R_{T_w,1}.
\end{equation}

\begin{theorem}[{\cite[Theorem 4.23]{orange}}]\label{t:finite-lusztig}
There is a bijection $\C X(\C W)\longrightarrow \Irr_u \C G^F$, $(x,\sigma)\mapsto \rho_{(x,\sigma)}$ such that, for all $(x,\sigma)\in \C X(\C W)$ and $\mu\in\Irr~\C W$:
\[\langle\rho_{(x,\sigma)},R_\mu\rangle_{\C G^F}=\Delta(x,\sigma) \{(x,\sigma),(y,\tau)\},
\]
where $\rho_{(y,\tau)}=\rho_\mu$ and $\Delta(x,\sigma)\in \{\pm 1\}$ is defined in \cite[\S 4.21]{orange}.
\end{theorem}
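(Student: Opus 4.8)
The plan is to follow Lusztig's strategy from \cite{orange}, reducing the statement to a family-by-family verification and, inside each family, to an explicit comparison of two bases of $R_u(\C G^F)$. First I would use Harish-Chandra theory: partition $\Irr_u\C G^F$ into Harish-Chandra series indexed by pairs $(M,\lambda)$, with $M$ an $F$-stable Levi subgroup and $\lambda$ a cuspidal unipotent character of $M^F$; the constituents of each series are parameterized by $\Irr$ of a relative Weyl group, which is again a Coxeter group with (possibly unequal) parameters. Using (\ref{RT-ind}) and the transitivity of the Deligne--Lusztig construction, one checks that the almost characters $R_\mu$ respect the coarser decomposition of $\Irr~\CW$ into families $\C F$. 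The theorem is then equivalent to the assertion that, for $\mu,\mu'$ ranging over a single family $\C F$, the matrix $\big(\langle\rho_{(x,\sigma)},R_\mu\rangle_{\C G^F}\big)$ coincides, up to the signs $\Delta(x,\sigma)$, with the matrix of the nonabelian Fourier transform $\{~,~\}$ on $M(\Gamma_{\C F})\times M(\Gamma_{\C F})$, once a bijection $M(\Gamma_{\C F})\leftrightarrow\{\rho\in\Irr_u\C G^F\text{ in the family }\C F\}$ has been fixed.

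Two structural inputs are required. \emph{(i) The classification of unipotent characters.} For classical types this is Lusztig's parameterization by \emph{symbols}: the family of a symbol is determined by its multiset of entries, $\Gamma_{\C F}\cong(\bZ/2)^k$ with $k$ read off from the symbol, and $M(\Gamma_{\C F})$ is visibly in bijection with the symbols of that content; for type $A$ every family is a singleton with $\Gamma_{\C F}=1$; for the exceptional types one uses the explicit list of families and of cuspidal unipotent characters. \emph{(ii) Orthogonality.} The generalized characters satisfy $\langle R_{T_w,1},R_{T_{w'},1}\rangle_{\C G^F}=|\{v\in\CW:v^{-1}wv=w'\}|$, so the $R_\mu$ form an orthonormal set in $R_u(\C G^F)$; hence the transition matrix $\big(\langle\rho,R_\mu\rangle\big)$ is unitary. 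To identify it, Lusztig compares \emph{fake degrees} (graded multiplicities of $\mu$ in the coinvariant algebra of $\CW$, equivalently Hecke-algebra data) with \emph{generic degrees} of the unipotent characters; the leading-term behaviour of the generic degrees pins down the matrix up to the finitely many ambiguities that are resolved by the formal properties of $\{~,~\}$.

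The verification then proceeds by type. Type $A$ is immediate, since $R_\mu=\pm\rho_\mu$. For classical types, the symbol combinatorics exhibit the transition matrix as a tensor power of the $2\times2$ matrix $\tfrac1{\sqrt2}\left(\begin{smallmatrix}1&1\\1&-1\end{smallmatrix}\right)$ (with the appropriate signs), which is precisely the Fourier transform attached to $(\bZ/2)^k$; the key computation here is Lusztig's evaluation of $\langle R_{T_w,1},\rho_S\rangle$ in terms of the symbol $S$, resting on the determination of Green functions for classical groups. For the exceptional types one carries out a finite but delicate computation in each group $G_2,F_4,E_6,{}^2E_6,E_7,E_8$, using the known Hecke-algebra parameters of the cuspidal Harish-Chandra series and checking the identity on the families with $\Gamma_{\C F}\in\{1,\bZ/2,S_3,S_4,S_5\}$. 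In the case of interest later in this paper, $G_2$, the only nontrivial family has $\Gamma_{\C F}=S_3$, so $|M(\Gamma_{\C F})|=3+2+3=8$, and the comparison is an explicit $8\times8$ check involving the four cuspidal unipotent characters of $G_2$.

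The main obstacle is that no fully uniform argument is available: the cases $\Gamma_{\C F}=S_4$ (in $E_7,E_8$) and $\Gamma_{\C F}=S_5$ (in $E_8$) require the hardest explicit computations and depend on a complete understanding of the cuspidal unipotent characters in those groups and of the associated Hecke algebras. A subsidiary difficulty is the consistent choice of the signs $\Delta(x,\sigma)$ together with the ``preferred extension'' normalization of each $\rho_\mu$: these must be fixed so that the transition matrix is literally $\{~,~\}$ rather than its composition with a diagonal sign matrix, and controlling this sign bookkeeping uniformly is exactly what forces the definition of $\Delta$ given in \cite[\S4.21]{orange}.
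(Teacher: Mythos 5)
The paper offers no proof of this statement: it is quoted directly as Lusztig's classification theorem \cite[Theorem 4.23]{orange}, so there is no internal argument to compare yours against. Judged on its own terms, your outline is a faithful skeleton of Lusztig's actual proof: the reduction to families via Harish-Chandra series, the orthonormality of the $R_\mu$ coming from the inner-product formula for the $R_{T_w,1}$, the symbol combinatorics identifying the transition matrix in classical types with the Fourier matrix of $(\bZ/2)^k$ (restricted to the appropriate subset of $M(\Gamma_{\C F})$), and the finite case-by-case analysis in the exceptional groups are all genuinely the structure of the argument, and your counts for $G_2$ ($\Gamma_{\C F}=S_3$, $|M(S_3)|=8$, four cuspidal unipotent characters) are right.

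That said, as a proof it has a real gap exactly where the orange book does its hardest work: the determination of the multiplicities $\langle\rho,R_{T_w,1}\rangle_{\C G^F}$. Your appeal to ``comparing fake degrees with generic degrees'' partly inverts the logical order of Lusztig's argument --- the generic degrees are largely an output of the multiplicity formula rather than an independent input. What actually pins down the transition matrix is a bootstrap using the eigenvalues of Frobenius on the cohomology of the Deligne--Lusztig varieties, disjointness of series, compatibility with twisted (Harish-Chandra and cohomological) induction from proper Levi subgroups, and, for classical groups, the prior explicit computations resting on Green functions; the signs $\Delta(x,\sigma)$ and the normalization of the bijection are then forced by these compatibilities (cuspidality, the $a$-function, Frobenius eigenvalues), not merely by the requirement that the matrix equal $\{~,~\}$. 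None of this can be supplied in a page, which is why the paper (reasonably) treats the statement as a black box; your proposal should be read as an accurate survey of where the proof lives rather than as a proof.
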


If $W$ is an irreducible Weyl group, then $\Delta\equiv 1$ except for the families that contain the representations $512_a'$ in $E_7$ or $4096_z$, $4096_x'$ in $E_8$. In light of this theorem, Lusztig \cite[(4.24.1)]{orange} defines the almost character $R_{(y,\tau)}$ for each $(y,\tau)\in \C X(\C W)$:
\begin{equation}
R_{(y,\tau)}=\sum_{\rho_{(x,\sigma)}\in \Irr_u \C G^F}\{(x,\sigma),(y,\tau)\}\Delta(x,\sigma)\rho_{(x,\sigma)}.
\end{equation}
When $\rho_{(y,\tau)}=\rho_\mu$ then $R_{(y,\tau)}=R_\mu$. The set $\{R_{(y,\tau)}:(y,\tau)\in \C X(\C W)\}$ is another orthonormal basis of $R_u(\C G^F)$ \cite[Corollary 4.25]{orange} and the change of basis matrix between this and the basis of irreducible characters is $\{~,~\}$. 

By transporting $\{~,~\}$ via the bijection $\C X(\C W)\longrightarrow \Irr_u \C G^F$ and extending it sesquilinearly, we can define the nonabelian Fourier transform, denoted also by $\{~,~\}$, on $R_u(\C G^F)$. We may also think of it as a linear map:
\begin{equation}
\C F\C T: R_u(\C G^F)\to R_u(\C G^F),\quad \rho\mapsto \sum_{\rho'\in \Irr_u \C G^F}\{\rho,\rho'\} \rho'.
\end{equation}
In \cite{char-sheaves}, Lusztig introduced another set of class functions on $\C G^F$, the characteristic functions $\chi_A$  of $F$-stable character sheaves $A$ on $\C G$ (certain simple perverse sheaves on $G$) and proved \cite[V, Theorem 25.2]{char-sheaves}, under certain restrictions on the characteristic $p$, that they form an orthonormal basis of the space of class functions on $\C G^F$. In particular, restricting to the space $R_u(\C G^F)$,  the set of characteristic functions of unipotent character sheaves gives an orthonormal basis. By further work of Lusztig and Shoji \cite{Sh}, the $F$-stable character sheaves on $\C G$ are parameterized by the same set $\C X(\C W)$ in such a way that if $A_{(y,\tau)}$ is the character sheaf corresponding to $(y,\tau)\in\C X(\C W)$, then 
\begin{equation}\label{almost-chars}
\chi_{A_{(y,\tau)}}=\zeta_{(y,\tau)} \cdot R_{(x,\sigma)},
\end{equation}
for some root of unity $\zeta_{(y,\tau)}$. These roots of unity are explicitly known in most cases, see \cite{L7,Sh,Sh-odd,Sh-even,DLM}. The behavior of the $\chi_A$'s with respect to induction and restriction is studied in \cite[III, Section 15]{char-sheaves}, particularly Proposition 15.7 in {\it loc. cit.}. These results in conjunction with (\ref{almost-chars}) allows one to see that, in particular, the nonabelian Fourier transform $\{~,~\}$ preserves the subspace of $R_u(\C G^F)$ spanned by the proper parabolically induced representations. We will need this fact in subsection \ref{s:finite-elliptic}.

\subsection{Finite $G_2$}
Consider the case when $G$ is the group of type $G_2$ defined over $\mathbb F_q$. Then the finite Weyl group $W(G_2)$ has $6$ irreducible representations, four $1$-dimensional and two $2$-dimensional. In the notation of Carter \cite{Ca}, these are labelled $\phi_{(1,0)}$, $\phi_{(1,3)}'$, $\phi_{(1,3)}''$, $\phi_{(1,12)}$, $\phi_{(2,1)}$, and $\phi_{(2,2)}$. Let $s_1$ denote the long reflection and $s_2$ the short reflection of $W(G_2)$. The character table is Table \ref{t:W(G2)}.

\begin{table}[h]\label{t:W(G2)}
\caption{Character table of $W(G_2)$}
\begin{tabular}{|c|c|c|c|c|c|c|}
\hline
&$1$ &$s_1$ &$s_2$ &$s_1s_2$ &$(s_1s_2)^2$ &$(s_1s_2)^3$\\
\hline
$\phi_{(1,0)}$ &$1$ &$1$ &$1$ &$1$ &$1$ &$1$\\
\hline
$\phi_{(1,3)}'$ &$1$ &$-1$ &$1$ &$-1$ &$1$ &$-1$\\
\hline
$\phi_{(1,3)}''$ &$1$ &$1$ &$-1$ &$-1$ &$1$ &$-1$\\
\hline
$\phi_{(1,6)}$ &$1$ &$-1$ &$-1$ &$1$ &$1$ &$1$\\
\hline
$\phi_{(2,1)}$ &$2$ &$0$ &$0$ &$1$ &$-1$ &$-2$\\
\hline
$\phi_{(2,2)}$ &$2$ &$0$ &$0$ &$-1$ &$-1$ &$2$\\
\hline
\end{tabular}
\end{table}
Denote the Coxeter element $c(G_2)=s_1s_2$. The elliptic conjugacy classes are $c(G_2)$, $c(G_2)^2$, and $c(G_2)^3$ of sizes $2$, $2$, $1$, respectively. With respect to the \emph{usual character pairing}, on orthonormal basis of $\overline R(W(G_2))_\el$ is given by 
\[\{\sqrt 6\cdot 1_{c(G_2)},\sqrt 6\cdot 1_{c(G_2)^2},\sqrt{12}\cdot 1_{c(G_2)^3}\},\]
where
\begin{equation}
\begin{aligned}
1_{c(G_2)}&=\frac 16(\phi_{(1,0)}-\phi_{(1,3)}'-\phi_{(1,3)}''+\phi_{(1,6)}+\phi_{(2,1)}-\phi_{(2,2)}),\\
1_{c(G_2)^2}&=\frac 16(\phi_{(1,0)}+\phi_{(1,3)}'+\phi_{(1,3)}''+\phi_{(1,6)}-\phi_{(2,1)}-\phi_{(2,2)}),\\
1_{c(G_2)^3}&=\frac 1{12}(\phi_{(1,0)}-\phi_{(1,3)}'-\phi_{(1,3)}''+\phi_{(1,6)}-2\phi_{(2,1)}+2\phi_{(2,2)}).\\
\end{aligned}
\end{equation}

There are three families of Weyl group representations:
\[\C F:\quad \{\phi_{(1,0)}\},\quad \{\phi_{(2,1)},\phi_{(1,3)}',\phi_{(1,3)}'',\phi_{(2,2)}\},\quad \{\phi_{(1,6)}\}.
\]
The finite groups $\Gamma_{\C F}$ associated to them are:
\[\Gamma_{\C F}: \quad 1,\quad S_3,\quad 1.
\]
We use the notation of \cite{orange} for representatives of the conjugacy classes in $S_3$ and the representations of the centralizers. Let $1$ be the identity in $S_3$, $g_2$ be a transposition and $g_3$ a $3$-cycle. The centralizers are $S_3$, $\bZ/2\bZ=\{1,g_2\}$ and $\bZ/3\bZ=\{1,g_3,g_3^2\}$, respectively. Let $1$ denote the trivial representation for each of these groups. Let $r$, $\epsilon$ denote the  standard, respectively sign representation  of $S_3$. Let $\epsilon$ denote the nontrivial character of $\bZ/2\bZ$ and $\theta$ the representation $g_3\mapsto \theta$ of $\bZ/3\bZ$, where $\theta$ is a nontrivial third root of unity.

Under the parameterization from Theorem \ref{t:finite-lusztig}, the unipotent minimal principal series representations attached to irreducible characters of $W(G_2)$ in the middle family correspond to pairs in $M(S_3)$ as follows:
\begin{equation}
\rho_{(2,1)}\to (1,1),\quad \rho_{(1,3)}'\to (1,r),\quad \rho_{(1,3)}''\to (g_3,1), \quad\rho_{(2,2)}\to (g_2,1).
\end{equation}
The remaining four unipotent representations are all cuspidal, and they correspond to pairs as follows (\cite[page 478]{Ca}):
\begin{equation}
G_2[1]\to (1,\epsilon), \quad G_2[-1]\to (g_2,\epsilon),\quad G_2[\theta]\to (g_3,\theta),\quad G_2[\theta^2]\to (g_3,\theta^2).
\end{equation}
The nonabelian Fourier transform for $S_3$ in the ordered set
\[(1,1),\ (1,r),\ (1,\epsilon),\ (g_2,1),\ (g_2,\epsilon),\ (g_3,1),\ (g_3,\theta),\ (g_3,\theta^2)\]
is:
\begin{equation}
\left(
\begin{matrix}
\frac 16 &\frac 13 &\frac 16 &\frac 12 &\frac 12 &\frac 13 &\frac 13 &\frac 13\\
\frac 13 &\frac 23 &\frac 13 &0 &0 &-\frac 13 &-\frac 13 &-\frac 13\\
\frac 16 &\frac 13 &\frac 16 &-\frac 12 &-\frac 12 &\frac 13 &\frac 13 &\frac 13\\
\frac 12 &0 &-\frac 12 &\frac 12 &-\frac 12 &0&0&0\\
\frac 12 &0 &-\frac 12 &-\frac 12 &\frac 12 &0&0&0\\
\frac 13 &-\frac 13 &\frac 13 &0&0 &\frac 23 &-\frac 13 &-\frac 13\\
\frac 13 &-\frac 13 &\frac 13 &0&0 &-\frac 13 &\frac 23 &-\frac 13\\
\frac 13 &-\frac 13 &\frac 13 &0&0 &-\frac 13 &-\frac 13 &\frac 23\\
\end{matrix}
\right)
\end{equation}
All $\Delta(x,\sigma)=1$ for $G_2$.

\subsection{Finite elliptic Fourier transform}\label{s:finite-elliptic} For every $F$-stable Levi subgroup $\C L\supset T_0$, let $i_{\C L^F}^{\C G^F}:R_u(\C L^F)\to R_u(\C G^F)$ and $r_{\C G^F}^{\C L^F}:R_u(\C G^F)\to R_u(\C L^F)$ be the parabolic induction, respectively parabolic restriction maps. (These are well defined since induction/restriction take unipotent representations to unipotent representations.) Define the space of elliptic unipotent representations 
\begin{equation}
\overline R_u(\C G^F)_\el=\bigcap_{\C L\neq \C G}\ker r_{\C G^F}^{\C L^F}.
\end{equation}
The space $\overline R_u(\C G^F)$ can be naturally identified with the space of unipotent class functions of $\C G^F$ which vanish on the nonelliptic conjugacy classes of $\C G^F.$ This notion of elliptic is compatible with the notion of elliptic representations of the finite Weyl group $\C W$, namely, the injection $R(\C W)\hookrightarrow R_u(\C G^F)$ induces an injection $\overline R(\C W)_\el\hookrightarrow \overline R_u(\C G^F)_\el$. This follows at once from the fact that the injections $R(\C W)\hookrightarrow R_u(\C G^F)$ are compatible with parabolic induction and restriction.

The following fact is extracted from the works of Lusztig \cite{char-sheaves} and Shoji \cite{Sh} by Moeglin and Waldspurger \cite[sections 2.7, 2.8, 3.16, 4.3]{MW}, see our discussion at the end of subsection \ref{s:finite-groups}.

\begin{proposition}
The nonabelian Fourier tranform preserves the elliptic space, i.e., $$\C F \C T (\overline R_u(\C G^F)_\el)=\overline R_u(\C G^F)_\el.$$
\end{proposition}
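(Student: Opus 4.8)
The plan is to deduce the claim from two facts already assembled in the excerpt: first, that the nonabelian Fourier transform $\C F\C T$ on $R_u(\C G^F)$ coincides (up to the signs $\Delta$ and roots of unity $\zeta$, which are irrelevant for the span) with the change of basis between irreducible unipotent characters and the almost characters $R_{(y,\tau)}$, equivalently the characteristic functions $\chi_A$ of unipotent character sheaves; and second, that the space $R_u(\C G^F)_{\ind}$ spanned by proper parabolically induced unipotent representations is $\C F\C T$-stable, a fact noted at the end of subsection \ref{s:finite-groups} as a consequence of Lusztig's analysis of the behaviour of $\chi_A$ under induction/restriction (\cite[III, Section 15, Proposition 15.7]{char-sheaves}) together with (\ref{almost-chars}). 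The point is that the elliptic space $\overline R_u(\C G^F)_\el$ is, by definition, $\bigcap_{\C L\neq\C G}\ker r_{\C G^F}^{\C L^F}$, which is the orthogonal complement — with respect to the character pairing $\langle~,~\rangle_{\C G^F}$ — of $R_u(\C G^F)_{\ind}$ inside $R_u(\C G^F)$, since parabolic induction and restriction are adjoint to each other under the character pairing.

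So the steps are as follows. First I would record that $R_u(\C G^F)=\overline R_u(\C G^F)_\el\oplus R_u(\C G^F)_{\ind}$ is an orthogonal decomposition for $\langle~,~\rangle_{\C G^F}$: indeed $\langle r_{\C G^F}^{\C L^F}\rho,\psi\rangle_{\C L^F}=\langle\rho,i_{\C L^F}^{\C G^F}\psi\rangle_{\C G^F}$, so $\rho$ lies in $\overline R_u(\C G^F)_\el$ exactly when it is orthogonal to every $i_{\C L^F}^{\C G^F}\psi$ with $\C L\neq\C G$, i.e.\ to all of $R_u(\C G^F)_{\ind}$. Second, I would invoke that $\C F\C T$ is, up to scalars of modulus one on each basis vector, a permutation-type change of basis between two orthonormal bases of $R_u(\C G^F)$ (the irreducible characters and the almost characters), hence $\C F\C T$ is unitary for $\langle~,~\rangle_{\C G^F}$; therefore $\C F\C T$ preserves orthogonal complements of $\C F\C T$-stable subspaces. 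Third, I would cite the already-stated fact that $R_u(\C G^F)_{\ind}$ is $\C F\C T$-stable. Combining, $\C F\C T$ preserves $\overline R_u(\C G^F)_\el=R_u(\C G^F)_{\ind}^{\perp}$, which is the assertion.

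Alternatively, and perhaps more transparently, one can argue directly via the sheaf side: by \cite[III, Prop.\ 15.7]{char-sheaves}, parabolic induction sends (sums of) characteristic functions of unipotent character sheaves on a Levi to sums of characteristic functions of unipotent character sheaves on $\C G$, and parabolic restriction does likewise; hence both $R_u(\C G^F)_{\ind}$ and its complement are spanned by subsets of the $\{\chi_A\}$. Since $\C F\C T$ is (up to the roots of unity in (\ref{almost-chars}) and the signs $\Delta$) precisely the transition matrix between the $\chi_A$ basis and the $\Irr_u\C G^F$ basis, expressing an elliptic class function in the $\chi_A$ basis and applying $\C F\C T$ keeps it in the span of the "elliptic" character sheaves, hence in $\overline R_u(\C G^F)_\el$. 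Equality rather than mere inclusion then follows because $\C F\C T$ is invertible (indeed an involution up to the sign twist) and already preserves $R_u(\C G^F)_{\ind}$, so by dimension count it must also preserve the complementary elliptic space.

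The main obstacle is not any of the linear algebra above, which is routine, but the input that $R_u(\C G^F)_{\ind}$ is $\C F\C T$-stable: this rests on Lusztig's results on the compatibility of character sheaves with induction and restriction \cite[III, Section 15]{char-sheaves} and on the identification (\ref{almost-chars}) of almost characters with characteristic functions of character sheaves, valid in good characteristic, and it is precisely this point that was attributed to the extraction carried out by Moeglin and Waldspurger \cite[sections 2.7, 2.8, 3.16, 4.3]{MW}. Once that is granted, the proof of the proposition is the short orthogonal-complement argument sketched above.
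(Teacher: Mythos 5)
Your proposal is correct and follows essentially the same route as the paper, which proves the proposition by citing Moeglin--Waldspurger's extraction from Lusztig's character-sheaf results and Shoji's work, together with the observation at the end of the finite-groups subsection that $\C F\C T$ preserves the span of properly parabolically induced unipotent characters. You have merely made explicit the routine remaining step (adjunction of induction and restriction identifies the elliptic space with the orthogonal complement of the induced span, and $\C F\C T$ is unitary as a transition matrix between orthonormal bases), which the paper leaves implicit.
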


In light of this proposition, let $\C F\C T_\el$ denote the restriction of $\C F\C T$ to $\overline R_u(\C G^F)_\el.$ 

\begin{example}
We compute $\C F\C T_\el$ when $\C G=G_2$. A basis of $\overline R_u(G_2({\mathbb F}_q))_\el$, orthonormal with respect to the character pairing, is given by:
\begin{equation}
\C B_\el(G_2({\mathbb F}_q))=\{\sqrt 6\cdot \rho_{c(G_2)},\ \sqrt 6\cdot \rho_{c(G_2)^2},\ \sqrt{12}\cdot \rho_{c(G_2)^3},\ G_2[1],\ G_2[-1],\ G_2[\theta],\ G_2[\theta^2]\},
\end{equation}
where by $\rho_{c(G_2)^i}$ we denote the class function in $R_u(G_2^F)$ corresponding to $1_{c(G_2)^i}$. In other words, as a virtual representation, $\rho_{c(G_2)}=\frac 16(\rho_{(1,1)}-\rho_{(1,3)}'-\rho_{(1,3)}''+\rho_{(1,6)}+\rho_{(2,1)}-\rho_{(2,2)})$ etc. In this basis, the elliptic nonabelian Fourier transform is:
\begin{equation}
\C F\C T_\el(G_2({\mathbb F}_q))=
\left(\begin{matrix} \frac 16 &\frac 12 &\frac{\sqrt 2}3 &0 &\frac 1{\sqrt 6}&\frac 1{\sqrt 6}&\frac 1{\sqrt 6}\\
\frac 12 &\frac 12 &0 &\frac 1{\sqrt 6} &0 &-\frac 1{\sqrt 6} &-\frac 1{\sqrt 6}\\
\frac{\sqrt 2}3 &0 &\frac 13 &-\frac 1{\sqrt 3} &-\frac 1{\sqrt 3} &0 &0\\
0 &\frac 1{\sqrt 6} &-\frac 1{\sqrt 3} &\frac 16 &-\frac 12 &\frac 13 &\frac 13\\
\frac 1{\sqrt 6} &0 &-\frac 1{\sqrt 3} &-\frac 12 &\frac 12 &0&0\\
\frac 1{\sqrt 6} &-\frac 1{\sqrt 6} &0 &\frac 13 &0 &\frac23 &-\frac 13\\
\frac 1{\sqrt 6} &-\frac 1{\sqrt 6} &0 &\frac 13 &0 &-\frac 13 &\frac 23
\end{matrix}\right).
\end{equation}
This is an orthogonal, symmetric matrix.
\end{example}

\subsection{Simple $p$-adic groups} We return to the setting of $p$-adic groups. Since the classification of unipotent representations recalled below has only been obtained for simple (or quasisimple) groups, we will assume that $G$ is a simple split $p$-adic group. 

\begin{definition}
An irreducible smooth representation $(\pi,V)$ of $G$ is called \emph{unipotent} if there exists a parahoric subgroup $P_c$ of $G$ and a cuspidal unipotent representation $\rho$ of $M_c$ such that $\Hom_{M_c}(\rho,V^{U_c})\neq 0.$ 

Let $\Irr_u G$ denote the set of characters of irreducible unipotent $G$-representations and let $R_u(G)$ be their complex span.  Let $\overline R_u(G)_\el$ denote the image of $R_u(G)$ in $\overline R(G)_\el$. 
\end{definition}
By \cite[Theorem 6.11]{MP}, the functors of parabolic induction and the Jacquet functors map unipotent representations to unipotent representations.

\smallskip

Let $\Irr_{u,\temp} G$ denote the set of characters of irreducible tempered unipotent $G$-representations and define $R_u(G)_\temp$ to be their $\bC$-span. As before, we may identify $\overline R_u(G)_\el$ with the image of the Bernstein $A$-operator, and since every class in $\overline R(G)_\el$ is represented by a tempered character, we may think of the space of elliptic representations as
\[\overline R_u(G)_\el=A(R_u(G)_\temp)\subset R_u(G)_\temp.\]
Let $G^\vee$ denote the complex simple group dual to $G$. If $x\in G^\vee$, we will write $x=sn$ for the Jordan decomposition, $s$ semisimple, $n$ unipotent. Denote 
\[A_x=Z_{G^\vee}(x)/Z_{G^\vee}(x)^0 Z(G^\vee)=\C S_{G^\vee}(x)/Z(G^\vee),
\] 
where $\C S_{G^\vee}(x)$ is the group of components of $Z_{G^\vee}(x).$ 

We say that $s$ is compact if $s$ lies in a compact subgroup of $G^\vee.$ 
Denote by $\C T_{G^\vee}$ the set of elements of $G^\vee$ with compact semisimple part. Finally, we say that $x\in G^\vee$ is elliptic if $Z_{G^\vee}(x)$ does not contain any nontrivial torus. 
The parameterization of irreducible unipotent tempered representations is as follows.

\begin{theorem}[Langlands-Kazhdan-Lusztig classification, \cite{KL,Lu2}, \cite{Op,Op2}]\label{t:DLL} Suppose that $G$ is simple, split and adjoint. There exists a natural bijection between $\Irr_{u,\temp}G$ and $G^\vee$-orbits on 
\[\{(x,\phi)\mid x\in \C T_{G^\vee},\ \phi\in \widehat A_x\}.
\]
In this bijection, the irreducible square integrable representations correspond to the elliptic elements $x\in G^\vee$.
\end{theorem}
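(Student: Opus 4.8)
The plan is to reduce the classification to the representation theory of affine Hecke algebras and then to invoke the geometric (Deligne--Langlands--Lusztig) description of their simple modules. First I would apply the theory of types: by the work of Morris and Moy--Prasad \cite{MP} together with \cite{Lu2}, an irreducible unipotent representation $(\pi,V)$ with cuspidal datum $(P_c,\rho)$ gives, via $V\mapsto \Hom_{M_c}(\rho,V^{U_c})$, a module over the Hecke algebra $\CH(G,\rho)$ of the type $(P_c,\rho)$, and this yields an equivalence between the block of $\C C_u(G)$ determined by $(P_c,\rho)$ and the category of finite-dimensional $\CH(G,\rho)$-modules, compatibly with parabolic induction and the Jacquet functors. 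Lusztig then identifies $\CH(G,\rho)$ with an affine Hecke algebra $\CH(\mathcal R,\bbq)$ attached to a based root datum $\mathcal R$ (with, in general, unequal parameters) built from the reductive quotient $M_c$ and the dual group $G^\vee$.

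For the Iwahori-spherical block ($\rho$ trivial, $P_c=I$), $\CH(G,\rho)$ is the Iwahori--Hecke algebra, and the classification of its simple modules combined with Borel's equivalence \cite{Bo} is the Deligne--Langlands conjecture proved by Kazhdan and Lusztig \cite{KL}: simple modules are parametrized by $G^\vee$-orbits of triples $(s,n,\psi)$ with $s$ semisimple, $n$ unipotent with $s$ acting on the corresponding nilpotent by the scalar $\bbq$, and $\psi$ an irreducible representation of $A_{sn}$ occurring in the homology of the associated Springer-type variety. For the remaining blocks the analogous statement is Lusztig's classification of unipotent representations \cite{Lu2}, proved via graded affine Hecke algebras and equivariant homology of generalized Springer fibers. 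Matching the exponential of the semisimple parameter of the graded algebra with $x\in\C T_{G^\vee}$ and identifying the admissible $\psi$ with $\phi\in\widehat A_x$ produces the asserted bijection; alternatively one can transport the Plancherel-theoretic parametrization through Opdam's spectral transfer morphisms \cite{Op,Op2}.

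The temperedness assertion follows from Casselman's criterion on the Hecke-algebra side: a simple module is tempered exactly when its central character is compact, i.e.\ when $s$ lies in a compact subgroup of $G^\vee$; together with Opdam's description of the tempered spectrum this restricts the parametrization to pairs $(x,\phi)$ with $x\in\C T_{G^\vee}$. For square-integrability, Casselman's criterion says a tempered module is square integrable iff its Langlands parameter has strictly positive exponents, which on the dual side means that $Z_{G^\vee}(x)$ contains no nontrivial torus, i.e.\ $x$ is elliptic (equivalently $n$ is distinguished in $Z_{G^\vee}(s)$); Opdam's residue calculus \cite{Op} identifies these parameters with the ``elliptic'' residual points of the torus and confirms the count of the $\widehat A_x$.

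I expect the main obstacle to be the non-Iwahori-spherical blocks: one must establish the isomorphism $\CH(G,\rho)\cong\CH(\mathcal R,\bbq)$ with the correct unequal parameters, prove the geometric classification of simple modules for these unequal-parameter affine Hecke algebras (the hard geometric core of Lusztig's program, resting on character sheaves and the generalized Springer correspondence), and verify that the block-by-block parametrizations reassemble into the uniform statement in terms of $\C T_{G^\vee}$ while preserving the temperedness and square-integrability dictionaries under the Morita equivalences. By comparison the Iwahori-spherical case is ``only'' the Kazhdan--Lusztig theorem combined with Borel's equivalence of categories.
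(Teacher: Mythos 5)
The paper offers no proof of this theorem: it is quoted as a known classification, with the references \cite{KL}, \cite{Lu2}, \cite{Op}, \cite{Op2} standing in for the argument. Your outline --- Borel's equivalence and the Kazhdan--Lusztig proof of the Deligne--Langlands conjecture for the Iwahori block, Lusztig's Hecke-algebra isomorphisms and geometric classification for the remaining unipotent blocks, and Opdam's spectral/residue analysis for the temperedness and square-integrability dictionaries --- is a faithful reconstruction of exactly the proof strategy contained in those cited works, so it matches the paper's (implicit) approach.
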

Decompose
\begin{equation}
R_u(G)_\temp=\bigoplus_{x\in  \C T_{G^\vee}/G^\vee} R_u(G)_\temp^x,\quad \overline R_u(G)_\el=\bigoplus_{x\in \C T_{G^\vee}/G^\vee} \overline R_u(G)_\el^x.
\end{equation}
In the Iwahori case, Reeder \cite[Main Theorem]{Re} described the elements $x$ such that $\overline R_u(G)_\el^x\neq 0$. This description can be extended to the setting of unipotent representations of a simple, adjoint group $G$ by combining:
\begin{enumerate}
\item[(a)] the results of Opdam-Solleveld \cite{OS2} on the elliptic theory of affine Hecke algebras with arbitrary positive parameters, more precisely, the expression of the Euler-Poincar\' e pairing between two tempered modules in terms of the analytic R-group \cite[Theorem 6.5]{OS2};
\item[(b)] the Hecke algebra isomorphisms between the unipotent representations and the categories of modules for affine Hecke algebras with unequal parameters, Lusztig \cite[``the arithmetic/geometric correspondence'']{Lu2}, see also Opdam \cite[Theorem 3.4]{Op2}. 
\item[(c)] the equality of the analytic R-group with the geometric R-group, as in sections 8 and 9 of \cite{Re}, particularly, \cite[(9.2.1), (9.2.2), (9.2.3)]{Re}. 
\end{enumerate}
To state the result, we need one more definition.

\begin{definition}
A semisimple element $s\in G^\vee$ is called \emph{isolated} if $Z_{G^\vee}(s)$ is semisimple. A unipotent element $n\in G^\vee$ is called \emph{quasidistinguished} if it is the unipotent Jordan factor of an elliptic element $x\in G^\vee$. An elliptic unipotent element is called \emph{distinguished}. Denote by $\C T^0_{G^\vee}\subset \C T_{G^\vee}$ the set of elements $x=sn\in G^\vee$ such that $s$ is isolated and $n$ is quasidistinguished in $Z_{G^\vee}(s)$.
\end{definition}
 Every elliptic element $x\in G^\vee$ has a Jordan decomposition $x=sn$ where $s$ is isolated and $n$ is distinguished in $Z_{G^\vee}(s)$, hence $\C T^0_{G^\vee}$ contains all the elliptic elements of $G^\vee$. Notice however that if $x=n$ is quasidistinguished, but not distinguished in $G^\vee$, then $x$ is not elliptic, but $x\in \C T^0_{G^\vee}$.
 
\begin{theorem}[{\cite[Main Theorem]{Re}}]\label{t:reeder}
The space $\overline R_u(G)_\el^x$ is nonzero if and only if $x\in \C T^0_{G^\vee}.$
\end{theorem}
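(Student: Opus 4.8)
The plan is to transport the statement to affine Hecke algebras and then invoke the elliptic theory of \cite{OS2}, following Reeder's argument in the Iwahori case. By Lusztig's arithmetic/geometric correspondence \cite{Lu2} (see also \cite[Theorem 3.4]{Op2}), each block of $\C C_u(G)$ cut out by a cuspidal datum --- a parahoric $P_c$ together with a cuspidal unipotent representation of $M_c$ --- is equivalent to the module category of an affine Hecke algebra $\CH_{\mathbf b}$ with (in general unequal) positive parameters, and the equivalence is compatible with parabolic induction and the Jacquet functors. Hence it carries the Bernstein operator $A$, the elliptic quotient, and the Euler--Poincar\'e pairing to their counterparts on the $\CH_{\mathbf b}$-side. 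Under this dictionary the decomposition $R_u(G)_\temp=\bigoplus_x R_u(G)_\temp^x$ becomes the decomposition of tempered $\CH_{\mathbf b}$-modules by the $G^\vee$-orbit of the semisimple part $s$ of $x=sn$, refined by the nilpotent part $n\in\mathrm{Lie}\,Z_{G^\vee}(s)$ and a character of $A_x$; the square-integrable members are exactly those with $x$ elliptic, as in Theorem~\ref{t:DLL}.

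Fix $x=sn$. By \cite[Theorem 6.5]{OS2}, for an affine Hecke algebra with arbitrary positive parameters the Euler--Poincar\'e pairing on tempered modules is an orthogonal direct sum over ``standard tempered families'', each obtained by unitary parabolic induction from a discrete series $\delta$ of a Levi subalgebra; on the constituents of a single family --- parameterized by $\widehat{R_\delta}$ for the analytic R-group $R_\delta$ --- the pairing coincides with the elliptic pairing $\langle~,~\rangle_{R_\delta}^\el$ of \S4.1, with $R_\delta$ acting on the relevant real vector space. Consequently a family contributes to the elliptic quotient precisely when $R_\delta$ has an elliptic conjugacy class (an element acting without nonzero fixed vectors), and therefore $\overline R_u(G)_\el^x\neq 0$ if and only if, among the discrete-series data whose induction produces tempered modules with Kazhdan--Lusztig parameter $(x,\phi)$ for some $\phi$, at least one has such an R-group.

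It remains to identify these analytic R-groups with the geometric R-groups of \cite[\S\S8--9]{Re}; this is point (c), proved there in the Iwahori case, and it carries over to the $\CH_{\mathbf b}$ because both R-groups are built from the same normalizer data inside $Z_{G^\vee}(s)$. The geometric R-group attached to a discrete-series datum for $(x,\phi)$ is then a subquotient of $\C S_{Z_{G^\vee}(s)}(n)$ recording the extra symmetry of the $\mathfrak{sl}_2$-triple through $n$ relative to the chosen Levi, and one proves: some such R-group has an elliptic element if and only if $s$ is isolated in $G^\vee$ and $n$ is quasidistinguished in $Z_{G^\vee}(s)$. For ``if'': when $x$ is elliptic the attached square-integrable representation is already a nonzero orthonormal vector of $\overline R_u(G)_\el^x$; when $x\in\C T^0_{G^\vee}$ is not elliptic, $n$ is the unipotent part of an elliptic $x'=s'n$ in $Z_{G^\vee}(s)$, and $s'$ furnishes an element of the component group acting without nonzero fixed vectors on the ambient torus, hence an elliptic class in some $R_\delta$. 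For ``only if'': if $s$ is not isolated, the central torus $S=Z(Z_{G^\vee}(s)^0)^0$ is nontrivial, so $Z_{G^\vee}(s)^0$ --- hence every unipotent commuting with $s$, and $s$ itself --- lies in the proper Levi $M^\vee=Z_{G^\vee}(S)$, making every representation with parameter $sn$ properly parabolically induced, so $\overline R_u(G)_\el^x=0$; if $s$ is isolated but $n$ is not quasidistinguished in $Z_{G^\vee}(s)$, then by the classification of elliptic elements of these R-groups in \cite[\S\S8--9]{Re} each relevant R-group is of parabolic type without elliptic conjugacy classes, so the Euler--Poincar\'e pairing vanishes identically on the block and again $\overline R_u(G)_\el^x=0$.

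The main obstacle is this last equivalence: matching ``$n$ quasidistinguished in $Z_{G^\vee}(s)$'' with ``the relevant R-group has an elliptic conjugacy class'', uniformly in the presence of unequal parameters and disconnected centralizers (recall $G$ is adjoint, so $Z_{G^\vee}(s)$ may be disconnected and the R-groups can be nonabelian, e.g.\ $S_3$ in type $G_2$). Carrying it out means running through Lusztig's geometric parameterization to determine exactly which discrete series of which Levis induce to a given tempered module and to compute their R-groups. In the equal-parameter Iwahori case this is precisely Reeder's theorem; the content of the extension is checking that none of the ingredients (a), (b), (c) is disturbed by the passage to the geometric Hecke algebras attached to cuspidal unipotent data --- which holds because those correspondences respect parabolic induction and restriction and because \cite[Theorem 6.5]{OS2} is valid for arbitrary positive parameters.
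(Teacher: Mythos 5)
Your proposal is correct and follows essentially the same route as the paper: the paper does not prove this theorem but quotes Reeder's Main Theorem and indicates the extension to the unipotent setting via exactly the three ingredients you use --- Lusztig's arithmetic/geometric correspondence to affine Hecke algebras with unequal parameters, the Opdam--Solleveld computation of the Euler--Poincar\'e pairing on tempered modules through analytic R-groups, and the identification of analytic with geometric R-groups from \cite[\S\S8--9]{Re}. Your elaboration of the ``if'' and ``only if'' directions is consistent with Reeder's argument, so there is nothing further to add.
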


Let $\C U^\el_{G^\vee}$ denote the set of conjugacy classes of elements $n$ such that $n$ is the unipotent Jordan part of an element of  $x\in \C T^0_{G^\vee}.$ For each $n\in \C U^\el_{G^\vee}$, denote
\begin{equation}\label{e:ell-decomp}
\overline R_u(G)_\el^n=\bigoplus_{x=sn\in  \C T^0_{G^\vee}}\overline R_u(G)_\el^x.
\end{equation}

\begin{example}
\begin{enumerate}
\item Suppose $G^\vee=Sp(2n,\bC)$. The set of conjugacy classes of unipotent elements in $G^\vee$ is in one-to-one correspondence, via the Jordan canonical form, with partitions of $2n$ such that the odd parts appear with even multiplicity. The distinguished unipotent classes correspond to partitions where all parts are even and distinct. The quasidistinguished unipotent classes correspond to partitions where all parts are even and the multiplicity of each part is at most $2$. On the other hand, the set $\C U^\el_{G^\vee}$ is in bijection with partitions of $2n$ consisting of only even parts and where the multiplicity of each part is at most $4$.
\item If $G^\vee=G_2$, then $\C U^\el_{G^\vee}=\{G_2,G_2(a_1)\}$, where $G_2$ denotes the regular unipotent class and $G_2(a_1)$ the subregular unipotent class. Both classes are distinguished. 
\end{enumerate}
\end{example}

Let $x=sn\in G^\vee$ be an elliptic element. Choose a Lie homomorphism $\psi: SL(2)\to Z_{G^\vee}(s)$ such that $\psi(\left(\begin{matrix}1&1\\0&1\end{matrix}\right))=n$. Let $M=Z_{G^\vee}(\psi)$.  Then $\C S_{G^\vee}(n)=M/M^0$ as it is well known. Moreover,  $M^0$ is a torus and $Z_M(s)$ is finite \cite[Lemma 7.1]{Re2}. In particular, $Z_{G^\vee}(s,\psi)^0=Z_M(s)^0=\{1\}$, hence $\C S_{G^\vee}(sn)=\C S_{G^\vee}(s,\psi)=Z_M(s)$. Again by  {\it loc. cit.}, we have a natural surjective map
\begin{equation}\label{S-groups}
\C S_{G^\vee}(sn)=Z_M(s)\longrightarrow Z_{\C S_{G^\vee}(n)}(s)
\end{equation}
whose kernel is $Z_{M^0}(s)$, a finite group. When $n$ is distinguished in $G^\vee$, equivalently $M^0=\{1\}$, then 
\begin{equation}\label{comp-groups}
\C S_{G^\vee}(sn)\cong Z_{\C S_{G^\vee}(n)}(s).
\end{equation}
\begin{lemma}\label{l:M-param}
Suppose $n$ is distinguished in $G^\vee$. Let $\Sigma_n$ be the set of $Z_{G^\vee}(n)$-orbits on $$\{(s,\phi):\ s\in
Z_{G^\vee}(n)\text{ semisimple},\ \phi\in\widehat{\C S_{G^\vee}(sn)}\}.$$ Then $\Sigma_n$ can be naturally identified with $M(\C S_{G^\vee}(n)).$
\end{lemma}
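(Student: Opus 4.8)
The plan is to compare $\Sigma_n$ with $M(\C S_{G^\vee}(n))$ fiber by fiber. Both sets are naturally fibered over a set of conjugacy classes --- the set of semisimple classes of $Z_{G^\vee}(n)$ in the first case, and the set of conjugacy classes of $\Gamma:=\C S_{G^\vee}(n)$ in the second --- and the strategy is to identify these two base sets and then to match the corresponding fibers. Write $U=Z_{G^\vee}(n)^0$, so that $\Gamma=Z_{G^\vee}(n)/U$; since $n$ is distinguished, $Z_{G^\vee}(n)$ contains no nontrivial torus, hence $U$ is unipotent and $\Gamma$ is finite. Fix an $\mathfrak{sl}_2$-triple $(n,h,f)$ in the Lie algebra of $G^\vee$ and put $M_0=Z_{G^\vee}(n,h,f)$. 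Standard $\mathfrak{sl}_2$-theory shows that the composite $M_0\hookrightarrow Z_{G^\vee}(n)\twoheadrightarrow\Gamma$ is surjective with kernel $M_0^0$, which is trivial precisely because $n$ is distinguished; therefore $M_0\cong\Gamma$ and $Z_{G^\vee}(n)=M_0\ltimes U$ as an internal semidirect product. Note that every element of $M_0$ is semisimple, a finite-order unipotent element being trivial in characteristic $0$.

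The main step is to show that $s\mapsto\bar s$ (the image in $\Gamma$) induces a bijection from the set of $Z_{G^\vee}(n)$-conjugacy classes of semisimple elements of $Z_{G^\vee}(n)$ to the set of conjugacy classes of $\Gamma$. Well-definedness and injectivity are easy: if $\bar s$ and $\bar{s'}$ are $\Gamma$-conjugate, then after using surjectivity (proved next) to replace $s$ and $s'$ by elements of $M_0\cong\Gamma$, they are already $M_0$-conjugate, hence $s$ and $s'$ are $Z_{G^\vee}(n)$-conjugate. For surjectivity I would use that $U$ acts simply transitively on the set $\C Y$ of $\mathfrak{sl}_2$-triples completing $n$ (Kostant), and that $Z_{G^\vee}(n)$ acts on $\C Y$ compatibly with its conjugation action on the normal subgroup $U$. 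A semisimple $s\in Z_{G^\vee}(n)$ has finite order (its Zariski closure is diagonalizable, so it meets $U$ trivially and embeds into $\Gamma$), so the obstruction to $s$ fixing a point of the $U$-torsor $\C Y$ lies in the pointed set $H^1(\langle s\rangle,U)$; this set is trivial, because $U$ admits an $\langle s\rangle$-stable filtration with $\bG_a$-quotients and the higher cohomology of a finite cyclic group with coefficients in a characteristic-$0$ vector space vanishes. If $s$ fixes $u\cdot(n,h,f)$ for $u\in U$, then $u^{-1}su\in Z_{G^\vee}(n,h,f)=M_0$, which gives surjectivity. Finally, for a semisimple $s\in M_0$ one may take the homomorphism $\psi$ occurring in \eqref{S-groups} to be the copy of $SL(2)$ attached to $(n,h,f)$, which $s$ centralizes, so that the group $M$ there equals $M_0$; since $M_0^0$ is trivial, \eqref{comp-groups} supplies a natural isomorphism $\C S_{G^\vee}(sn)\cong Z_\Gamma(\bar s)$.

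To assemble the bijection, observe that taking the first coordinate maps $\Sigma_n$ onto the set of semisimple $Z_{G^\vee}(n)$-classes and $M(\Gamma)$ onto the set of $\Gamma$-classes, and these bases are identified by the previous step. Fix $x\in\Gamma$ with semisimple lift $\tilde x\in M_0\subseteq Z_{G^\vee}(n)$. By orbit-stabilizer, the fiber of $\Sigma_n$ over the class of $\tilde x$ is the set of orbits of the stabilizer $Z_{Z_{G^\vee}(n)}(\tilde x)=Z_{G^\vee}(\tilde x n)$ on $\widehat{\C S_{G^\vee}(\tilde x n)}$; this action factors through the component group $\C S_{G^\vee}(\tilde x n)$ acting by conjugation on its own irreducible characters, and $\C S_{G^\vee}(\tilde x n)\cong Z_\Gamma(x)$ by \eqref{comp-groups}. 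On the other side, the fiber of $M(\Gamma)$ over $x$ is the set of conjugation-orbits of $Z_\Gamma(x)$, the stabilizer of $x$ in $\Gamma$, on $\Irr Z_\Gamma(x)$. The isomorphism $\widehat{\C S_{G^\vee}(\tilde x n)}\cong\Irr Z_\Gamma(x)$ coming from \eqref{comp-groups} intertwines the two actions, so the fibers agree canonically; since all the identifications used are natural in $s$, the fiberwise bijections glue to a canonical bijection $\Sigma_n\cong M(\C S_{G^\vee}(n))$.

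I expect the only real obstacle to be the assertion in the middle step that every semisimple conjugacy class of $Z_{G^\vee}(n)$ meets the finite subgroup $M_0\cong\C S_{G^\vee}(n)$, i.e.\ the vanishing of $H^1(\langle s\rangle,U)$, together with the relation between $Z_{G^\vee}(sn)$ and $Z_{\C S_{G^\vee}(n)}(\bar s)$ that is already implicit in \eqref{S-groups}--\eqref{comp-groups}. These are standard properties of centralizers of distinguished nilpotent elements, and they are exactly where the hypothesis that $n$ is distinguished enters: for a merely quasidistinguished $n$, the group $M^0$ is a nontrivial torus and the analogous statement fails. Once this is granted, the rest is routine orbit-stabilizer bookkeeping.
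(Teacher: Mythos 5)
Your argument is correct and follows essentially the same route as the paper: the key input in both is the isomorphism $\C S_{G^\vee}(sn)\cong Z_{\C S_{G^\vee}(n)}(s)$ of (\ref{comp-groups}), after which the identification is orbit--stabilizer bookkeeping. The paper simply declares the rest ``immediate,'' whereas you have spelled out the step it suppresses, namely that every semisimple class of $Z_{G^\vee}(n)$ meets the finite reductive complement $M_0\cong\C S_{G^\vee}(n)$.
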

\begin{proof}
By (\ref{comp-groups}), $\C S_{G^\vee}(sn)\cong Z_{\C S_{G^\vee}(n)}(s).$ Recall that the elements of $M(\C S_{G^\vee}(n))$ are $\C S_{G^\vee}(n)$-orbits of pairs $(y,\tau)$, where $y$ is an element of $\C S_{G^\vee}(n)$ and $\tau$ is an irreducible representation of $Z_{\C S_{G^\vee}(n)}(y).$ The claim is immediate.
\end{proof}

\begin{example}\label{e:G2-params}
In the case $G=G_2$, the spaces $\overline R_u(G)_\el^n$, $n\in \{G_2,G_2(a_1)\}$ admit bases consisting of square integrable representations. The space $\overline R_u(G)_\el^{G_2}$ is one-dimensional, spanned by $v_1$, the Steinberg representation. The space $\overline R_u(G)_\el^{G_2(a_1)}$ is $8$-dimensional. The eight square integrable are as follows: $4$ have Iwahori-fixed vectors and $4$ are supercuspidal. The $4$ supercuspidal representations are compactly induced $\text{c-ind}_{K_0}^G(\wti\rho)$, where $K_0$ is the maximal hyperspecial compact open subgroup $K_0=G(\CO)$ and $\wti\rho$ is the pull-back to $K_0$ one of the $4$ cuspidal unipotent representations of $G_2({\mathbb F}_q)$. 

The group of components is $\C S_{G^\vee}(n)=S_3.$ The set $M(S_3)$ has precisely cardinality $8$, which is the same as the number of  irreducible unipotent square integrable representations with unipotent part $n$. This is not a coincidence, since $\Sigma_n\cong M(\C S_{G^\vee}(n))$ by Lemma \ref{l:M-param}.

\smallskip

There are $3$ isolated semisimple conjugacy classes in $G_2$, whose representatives we denote by
$s_0=1$, $s_1$, and $s_2.$ The corresponding centralizers in
$G^\vee=G_2$ have types $G_2$, $A_1\times \wti A_1,$ and $A_2$,
respectively.
The parameterizations and the division of the $8$ unipotent discrete series representations into L-packets are in Table \ref{t:G2-packets}. We label the representations $v_2,\dots,v_9$ for later reference.

\begin{table}[h]
\caption{Unipotent parameters for $n=G_2(a_1)$, $A_n=S_3$, in $G_2$\label{t:G2-packets}}
\begin{tabular}{|c|c|c|c|c|}
\hline
$Z_{G^\vee}(s)$ &$A_{su}$ &$ \phi\in\widehat{A_{su}}$ &$(y,\tau)\in
M(A_u)$ &Label; Representation \\
\hline
\hline
$G_2$ &$S_3$ &$1$ &$(1,1)$ &$v_2$; Iwahori, generic, dual of the affine reflection repn.\\
\hline
&&$\refl$ &$(1,r)$ &$v_3$; Iwahori, nongeneric, long reflection sign repn.\\
\hline
&&$\sgn$ &$(1,\epsilon)$ &$v_6$; supercuspidal $G_2[1]$\\
\hline
\hline
$A_1+\wti A_1$ &$\bZ/2\bZ$ &$1$ &$(g_2,1)$ &$v_5$; Iwahori, endoscopic
$A_1\times\wti A_1$\\
\hline
&&$\sgn$ &$(g_2,\epsilon)$ &$v_7$; supercuspidal $G_2[-1]$\\
\hline 
\hline
$A_2$ &$\bZ/3\bZ$ &$1$ &$(g_3,1)$ &$v_4$; Iwahori, endoscopic $A_2$\\
\hline
&&$\zeta$ &$(g_3,\theta)$ &$v_8$; supercuspidal $G_2[\theta]$\\
\hline
&&$\zeta^2$ &$(g_3,\theta^2)$ &$v_9$; supercuspidal $G_2[\theta^2]$\\

\hline
\end{tabular}
\end{table}

\end{example}

\subsection{The elliptic restriction map} Suppose $(\pi,V)$ is a unipotent representation of $G$. For every standard parahoric subgroup $P_J$, $J\subsetneq \Pi_a$, we define the \emph{restriction} of $\pi$ to be:
\begin{equation}
\res_{P_J}(\pi)=\text{the character of the }M_J\text{-representation on }V^{U_J}.
\end{equation}
We recall an important result of Moy and Prasad.
\begin{lemma}[{see \cite[Theorem 3.5]{MP}}]\label{l:MP} Let $G$ be a simple $p$-adic group and $(\pi,V)$ an admissible $G$-representation. Suppose that $J\subsetneq \Pi_a$ and that $V^{U_J}$ contains the cuspidal unipotent representation $\mu$ of $M_J$. If $J'\subsetneq \Pi_a$ is such that $V^{U_{J'}}\neq 0$ and $J'$ is minimal with respect to this property, then $J'=\omega J \omega^{-1}$ for some $\omega\in \Omega$ and $V^{U_{J'}}$ is a direct sum of copies of the twist $\mu^\omega$.  
\end{lemma}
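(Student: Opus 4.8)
The statement is a reformulation, in the language of standard parahoric subgroups and the set $\Pi_a$, of the uniqueness part of Moy--Prasad's theory of unrefined minimal $K$-types, and the plan is to deduce it from \cite[Theorem 3.5]{MP}. It is enough to treat the case in which $\pi$ is irreducible (the only case needed below), so assume this; then the hypothesis says exactly that $(P_J,\mu)$ is an unrefined minimal $K$-type of $\pi$ of depth $0$ with cuspidal reductive quotient $\mu$. The first step is to show that if $J'\subsetneq\Pi_a$ is minimal with $V^{U_{J'}}\ne 0$, then every irreducible constituent $\sigma$ of the $M_{J'}$-module $V^{U_{J'}}$ is cuspidal. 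If not, then, $M_{J'}$ being a finite reductive group and the coefficients being $\bC$, there is a proper parabolic $Q=LU_Q\subsetneq M_{J'}$, which after conjugation we may take to contain the image of the Iwahori $I$ in $M_{J'}$, such that $\sigma^{U_Q}\ne 0$; such a $Q$ is the standard parabolic of $M_{J'}$ attached to a proper subset $J''\subsetneq J'$ with Levi $L=M_{J''}$. A direct computation with the structure of parahoric subgroups shows $U_{J'}\subseteq U_{J''}$ and $U_{J''}/U_{J'}\cong U_Q$, so $(V^{U_{J'}})^{U_Q}=V^{U_{J''}}$ and hence $V^{U_{J''}}\supseteq\sigma^{U_Q}\ne 0$ with $J''\subsetneq J'\subsetneq\Pi_a$, contradicting the minimality of $J'$. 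Consequently each $(P_{J'},\sigma)$ is itself an unrefined minimal $K$-type of $\pi$ of depth $0$ with cuspidal reductive quotient.

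The second step is to invoke \cite[Theorem 3.5]{MP}: the unrefined minimal $K$-types of the irreducible $\pi$ form a single associativity class, so $(P_J,\mu)$ and each $(P_{J'},\sigma)$ from the first step are associate. Since both have depth $0$ and cuspidal reductive quotient, being associate here means being $G$-conjugate, so there is $g\in G$ with $gP_Jg^{-1}=P_{J'}$ and ${}^g\mu\cong\sigma$ as $M_{J'}$-representations. As $P_J$ and $P_{J'}$ are standard parahoric subgroups, their $G$-conjugacy forces $J$ and $J'$ to lie in the same $\Omega$-orbit, by the classification of parahoric subgroups recalled earlier; writing $J'=\omega J\omega^{-1}$ with $\omega\in\Omega$ and choosing a representative of $\omega$ realizing the conjugation modulo $P_{J'}^+$ identifies ${}^g\mu$ with the twist $\mu^\omega$. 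Running this over all constituents $\sigma$ of $V^{U_{J'}}$ yields $J'=\omega J\omega^{-1}$ and shows that $V^{U_{J'}}$ is a direct sum of copies of $\mu^\omega$, which is the assertion.

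I expect the main obstacle to lie not in the argument above but in importing \cite[Theorem 3.5]{MP}: one must match Moy--Prasad's formulation in terms of points of the building with the combinatorial description of parahorics by subsets of $\Pi_a$ modulo $\Omega$, and one must pin down the two normalizations used in the last step --- that the associativity relation of \cite{MP} specializes to $G$-conjugacy for depth-zero $K$-types with cuspidal reductive quotient, and that the conjugate ${}^g\mu$ may be taken to be exactly the twist $\mu^\omega$ (this last point involves the possible disconnectedness of $M_c^+$, i.e.\ the group $\Omega_c$). In the case of interest in this paper $G$ has type $G_2$, for which $\Omega=\{1\}$, so $J'=J$ on the nose and these normalization issues are vacuous.
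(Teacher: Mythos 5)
The paper gives no argument for this lemma beyond the citation to \cite[Theorem 3.5]{MP}, so there is no internal proof to compare against; your two-step reduction (cuspidality of every constituent of $V^{U_{J'}}$ at a minimal $J'$, via the identification $(V^{U_{J'}})^{U_{J''}/U_{J'}}=V^{U_{J''}}$ for $J''\subsetneq J'$, followed by Moy--Prasad's theorem that the unrefined minimal $K$-types of an irreducible representation form a single associativity class, which for depth-zero cuspidal types means $G$-conjugacy) is correct and supplies exactly the glue the citation leaves implicit. Your restriction to irreducible $\pi$ is not merely convenient but necessary---for a reducible admissible $V$ the summands can carry non-associate depth-zero types and the statement as literally written can fail---and your closing observation that $\Omega=\{1\}$ for $G_2$ disposes of the one genuinely delicate remaining point, namely whether all constituents of $V^{U_{J'}}$ are the \emph{same} twist $\mu^{\omega}$ (in general the admissible $\omega$ form a coset of the stabilizer of $J$ in $\Omega$, so a further word would be needed there).
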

This means that the image of the restriction lands in the unipotent $M_J$-characters, i.e., it defines a map $\res_{P_J}: R_u(G)\to R_u(M_J).$ Let 
\begin{equation}
\proj_\el^{M_J}: R_u(M_J)\to \overline R_u(M_J)_\el
\end{equation}
be the projection with respect to the ordinary character pairing. This is the same as defining $\proj_\el^{M_J}(\sigma)(\C C)=\tr\sigma(\C C)$ if $\C C\subset M_J$ is an elliptic conjugacy class and $\proj_\el^{M_J}(\sigma)(\C C)=0$ otherwise.
For the computations later on, if $\C B_\el$ is an orthonormal (with respect to the character pairing) basis of $\overline R_u(M_J)_\el$ (viewed as a subspace of $R_u(M_J)$), then $\proj_\el(\sigma)=\sum_{\rho\in \C B_\el}\langle\sigma,\rho\rangle_{M_J}\cdot \rho$.

\begin{definition} Suppose that $G$ is simple and simply-connected. 
Let $\Pi_{a,\max}$ denote the set of maximal subsets $J\subsetneq \Pi_a$. Define the \emph{unipotent elliptic restriction map}:
\begin{equation}
\begin{aligned}
\res_{u,\el}:~ &\overline R_u(G)_\el=A(R_u(G)_\temp)\longrightarrow \bigoplus_{J\in \Pi_{a,\max}}\overline R_u(M_J)_\el,\\ \pi&\longmapsto \sum_{J\in \Pi_{a,\max}} (\proj_\el^{M_J}\circ \res_{P_J})(\pi). 
\end{aligned}
\end{equation}
\end{definition}

Define $\overline \CH(G)^\el_u$ to be the image of $\overline R(G)_u^\el$ under the homomorphism $\tau_{HS}\circ\ep$. In other words, $\overline \CH(G)^\el_u$ is the span in $\overline \CH(G)$ of all the pseudocoefficients $f_\pi$ of elliptic unipotent representations $\pi$ of $G$. 

On the other hand, clearly extension by zero outside $P_J$ realizes $\overline R_u(M_J)_\el$ as a subspace of $\overline\CH_c(G).$ Let  $\iota:\bigoplus_{J\in \Pi_{a,\max}}\overline R_u(M_J)_\el\to \overline\CH_c(G)$ be the linear map defined by $\sum_J f_J\mapsto \wti f_J.$ 
The results in the following proposition are proved in \cite[Theorem 1.9]{MW}.

\begin{proposition}[{\cite{MW}}]\label{p:pseudo}
Suppose $G$ is simple and simply-connected. Then:
\begin{enumerate}
\item $\iota$ is injective and the image is $\overline\CH(G)^\el_u.$ In particular, we may identify $\overline\CH(G)^\el_u=\bigoplus_{J\in \Pi_{a,\max}}\overline R_u(M_J)_\el$.
\item $f_\pi\equiv \res_{u,\el}(\pi)$ in $\overline\CH(G)^\el_u$ for every $\pi\in \overline R_u(G)_\el$;
\item The map $\res_{u,\el}: \overline R_u(G)_\el\to \overline \CH(G)^\el_u$ is an isometry with respect to the Euler-Poincar\' e pairing $\EP$ for $\overline R_u(G)_\el$ and the ordinary character pairing for $\overline\CH(G)^\el_u=\bigoplus_{J\in \Pi_{a,\max}}\overline R_u(M_J)_\el$.
 \end{enumerate}
\end{proposition}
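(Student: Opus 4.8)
The plan is to reduce the whole statement to the explicit Schneider--Stuhler formula $(\ref{e:f-pi})$ for $f_\pi$, read against orbital integrals on the regular semisimple elliptic set, using two inputs from Bruhat--Tits theory. Throughout one uses that $G$ is simple and simply-connected, so $\Omega=1$; consequently $\Pi_{a,\max}=\Pi_{a,\max}/\Omega$, each facet $c_J$ with $J\in\Pi_{a,\max}$ is a vertex of $\C B_G$, $P_J^+=P_J$, $M_J$ is semisimple, and the $P_J$ for distinct $J$ are pairwise non-conjugate. I would first record: (i) a regular semisimple $\gamma\in G$ that fixes a facet of $\C B_G$ of positive dimension lies in a proper Levi subgroup of $G$, hence is not elliptic (such a $\gamma$ lies, modulo $Z(G)$, in a ${\mathsf k}$-split maximal torus); and (ii), using convexity of the fixed-point set of $\gamma$ in the CAT$(0)$ space $\C B_G$ together with (i): an elliptic regular semisimple element of $G$ fixes exactly one vertex of $\C B_G$; reduction modulo the prounipotent radical identifies the elliptic regular semisimple conjugacy classes of $M_J$ bijectively with the elliptic ones of $G$ that meet $P_J$; these families are pairwise disjoint as $J$ ranges over $\Pi_{a,\max}$; and an element $\omega\subset G_{sr}$ meeting $P_J$ is non-elliptic in $G$ as soon as $\bar\omega$ is non-elliptic in $M_J$.

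For part~(2), fix $\pi\in\overline R_u(G)_\el$. By Theorem~\ref{t:Dat}(2), $f_\pi$ lies in $\overline\CH(G)^\el$, i.e.\ in $\overline\CH_c(G)$ with vanishing orbital integrals on non-elliptic classes. I would then evaluate $\Phi(\omega,f_\pi)=\Phi(\omega,F_\pi)$ for an elliptic $\omega\subset G_{sr}$ using $(\ref{e:f-pi})$. The summand indexed by $J$ is supported on $P_J^+$, so it contributes only if (a conjugate of) $\omega$ fixes $c_J$; by (i) this forces $J$ maximal, and by (ii) there is a unique such $J=J(\omega)$, with $\bar\omega$ elliptic in $M_{J(\omega)}$. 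Since $c_J$ is a vertex, the orientation character $\epsilon_J$ is trivial and the sign $(-1)^{|J|}$ is the same for all maximal $J$; unwinding the orbital integral over $G$ (the factor $\mu(P_J^+)^{-1}$ doing its job) identifies this single contribution with $\chi_{J(\omega)}(\bar\omega)=(\res_{P_{J(\omega)}}\pi)(\bar\omega)=(\proj_\el^{M_{J(\omega)}}\res_{P_{J(\omega)}}\pi)(\bar\omega)$, the last equality because $\bar\omega$ is elliptic. On the other hand $\iota(\res_{u,\el}\pi)=\sum_{J}\wti{\bigl(\proj_\el^{M_J}\res_{P_J}\pi\bigr)}$ lies in $\overline\CH_c(G)$ by construction, has vanishing non-elliptic orbital integrals by (ii) and the definition of $\proj_\el^{M_J}$, and by the same computation has the same orbital integral at every elliptic $\omega$ as $f_\pi$. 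Since an element of $\overline\CH(G)^\el$ is determined by its orbital integrals on the regular semisimple elliptic set --- the pairing $\Theta_{\pi'}(f_\pi)=\EP(\pi,\pi')$ dual to the isomorphism of Theorem~\ref{t:Dat}(2) is computed by integrating characters over that set --- we conclude $f_\pi=\iota(\res_{u,\el}\pi)$, which is (2). The identification in (1) follows: $\overline\CH(G)^\el_u\subseteq\im\iota$ by what we just showed, and the reverse inclusion is the surjectivity of $\res_{u,\el}$, which comes from the classification (Theorems~\ref{t:DLL} and \ref{t:reeder}): there are enough elliptic unipotent representations of $G$ to hit every elliptic unipotent class function on every $M_J$. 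Injectivity of $\iota$ is the same computation backwards: if $\sum_J\wti f_J$ vanishes in $\overline\CH(G)$ then all its orbital integrals vanish, and evaluating on the elliptic classes of the various $M_J$ --- which by (ii) come from disjoint families of elliptic $G$-classes --- forces each $f_J=0$.

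Part~(3) then follows from (2) and the identity $\Theta_{\pi'}(f_\pi)=\EP(\pi,\pi')$. For a virtual unipotent character $\sigma$ of $M_J$, inflated to $P_J$ and extended by zero to $\wti\sigma\in\CH(G)$, one has $\Theta_{\pi'}(\wti\sigma)=\langle\sigma,\res_{P_J}\pi'\rangle_{M_J}$ --- this is exactly what the normalizing factor $\mu(P_J^+)^{-1}$ in $(\ref{e:f-pi})$ achieves, $\pi'(\wti\sigma)$ acting on $(V_{\pi'})^{U_J}$ as the operator of that trace. Hence
\begin{align*}
\EP(\pi,\pi')&=\Theta_{\pi'}\bigl(\iota(\res_{u,\el}\pi)\bigr)\\
&=\sum_{J\in\Pi_{a,\max}}\bigl\langle\proj_\el^{M_J}\res_{P_J}\pi,\ \res_{P_J}\pi'\bigr\rangle_{M_J}\\
&=\sum_{J\in\Pi_{a,\max}}\bigl\langle\proj_\el^{M_J}\res_{P_J}\pi,\ \proj_\el^{M_J}\res_{P_J}\pi'\bigr\rangle_{M_J},
\end{align*}
the last step because $\proj_\el^{M_J}$ is the orthogonal projection for the character pairing; the right-hand side is the pairing $\langle\res_{u,\el}\pi,\res_{u,\el}\pi'\rangle$ on $\bigoplus_J\overline R_u(M_J)_\el$, so $\res_{u,\el}$ is an isometry.

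The hard part will be the ``localization at the vertices'' used above: that in the elliptic cocenter the Schneider--Stuhler expansion of $f_\pi$ is governed entirely by the maximal parahorics. This is where the geometric inputs (i)--(ii) are essential, and one must be careful with (a) the behaviour of orbital integrals under the reduction $P_J\twoheadrightarrow M_J$ and the attendant measure normalizations, (b) the orientation characters $\epsilon_J$ and the signs $(-1)^{|J|}$, and (c) the fact that an element of $\overline\CH(G)^\el$ is pinned down by its regular semisimple elliptic orbital integrals. The remaining steps are bookkeeping with Theorem~\ref{t:Dat} and the ordinary character theory of the finite reductive quotients $M_J$.
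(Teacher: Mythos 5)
The paper does not actually prove this proposition; it is imported wholesale from \cite[Theorem 1.9]{MW}, so your attempt is being measured against that argument. The overall architecture you propose (localize the Schneider--Stuhler sum (\ref{e:f-pi}) at the maximal parahorics inside the elliptic cocenter, then deduce (3) from (2) via $\Theta_{\pi'}(\wti\sigma)=\langle\sigma,\res_{P_J}\pi'\rangle_{M_J}$ and self-adjointness of $\proj_\el^{M_J}$) is the right one, and your derivation of (3) from (2) is correct. The fatal problem is that the two geometric inputs (i) and (ii) carrying the proof of (2) are false. An elliptic regular semisimple element can fix positive-dimensional facets and many vertices of $\C B_G$: already in $SL_2$, an element $a+b\sqrt{\pi}$ of a ramified anisotropic torus with $b\in\mathfrak p$ fixes the fundamental edge pointwise (the action of the simply-connected group is type-preserving, so fixing the barycenter forces fixing the closed edge), and any elliptic regular $\gamma$ with $\gamma\equiv z \bmod \mathfrak p^N$ for $z$ central fixes a ball of radius comparable to $N$, hence a large finite subcomplex. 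Consequently the summands of (\ref{e:f-pi}) with $J$ non-maximal do contribute to $\Phi(\omega,F_\pi)$ for elliptic $\omega$, there is no unique $J(\omega)$, and the asserted disjoint matching of elliptic classes of $G$ with elliptic classes of the various $M_J$ collapses. The correct mechanism is global rather than facet-by-facet: $\C B_G^\gamma$ is a nonempty compact \emph{contractible} subcomplex, and the alternating sum over all its facets computes $\Theta_\pi(\gamma)$ as an equivariant Euler characteristic (\cite[III.4]{SS}, \cite{Ka2}); passing from that to the congruence $f_\pi\equiv\sum_{J\in\Pi_{a,\max}}\wti{\left(\proj_\el^{M_J}\res_{P_J}(\pi)\right)}$ in the elliptic cocenter requires the separate comparison of elliptic pairings carried out in \cite{MW} (and, in the Hecke-algebra setting, in \cite[Section 3]{OS}), which ultimately rests on the classification of elliptic conjugacy classes of the affine Weyl group by elliptic classes of the finite isotropy groups $W_v$ --- a theorem, not an observation.

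A second, independent gap is the surjectivity in (1). Granting (3), the map $\res_{u,\el}$ is injective, so surjectivity of $\iota$ onto $\overline\CH(G)^\el_u$ amounts to the equality $\dim\overline R_u(G)_\el=\sum_{J\in\Pi_{a,\max}}\dim\overline R_u(M_J)_\el$, which you assert by saying there are ``enough'' elliptic unipotent representations. This numerical coincidence is precisely the hard content of the classification (Theorems \ref{t:DLL} and \ref{t:reeder} combined with Lusztig's parametrization of $\Irr_u M_J^F$); for $G_2$ it is the count $1+8=7+1+1$ visible in the paper's tables, and in general it cannot be waved through. Your injectivity argument for $\iota$ leans on the same false disjointness as in (ii), so it too needs to be replaced (e.g.\ by pairing $\wti f_J$ against suitable characters via the trace isomorphism).
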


An analogous result was proved in \cite[Section 3]{OS} in the setting of the affine Hecke algebra $\CH$  of an affine Weyl group $W$ and with arbitrary positive parameters. In that case, one has an isometric isomorphism $\overline R(\CH)_\el\to \overline R(W)_\el$ with respect to the Euler-Poincar\' e pairings on both spaces. Moreover, $\overline R(W)_\el$ is naturally isometrically isomorphic with the direct sum 
\begin{equation}\label{e:affine-Weyl}
\overline R(W)_\el\cong\bigoplus_v\overline R(W_v)_\el
\end{equation}
over the vertices $v$ of the fundamental alcove $c_0$. Here $W_v$ is the finite Weyl group which centralizes in $W$ the vertex $v$, but the pairing for $\overline R(W_v)_\el$ needed for the isometry in (\ref{e:affine-Weyl}) is the ordinary character pairing of the finite Weyl group.

\subsection{The elliptic nonabelian Fourier transform for $G_2$} In this subsection, we investigate the behavior of the nonabelian Fourier transform with respect to the restriction map just defined.

\begin{definition}
In light of Proposition \ref{p:pseudo}, define the \emph{elliptic unipotent nonabelian Fourier transform} of $G$ to be
\begin{equation}
\C F\C T_{u,\el}: \overline\CH(G)^\el_u\longrightarrow \overline\CH(G)^\el_u,\quad \C F\C T_{u,\el}=\bigoplus_{J\in \Pi_{a,\max}} \left(\C F\C T_\el: \overline R_u(M_J)_\el\to \overline R_u(M_J)_\el\right).
\end{equation}
\end{definition}

Now suppose that $n\in \C U_{G^\vee}^\el$ is distinguished and special. As explained in Example \ref{e:G2-params}, we may identify the parameterizing set of $\Irr_u(G)_\el^n$ with $M(A_n)$. For a square integrable representation $\pi\in \Irr_u(G)_\el^n$, let $\lambda_\pi\in M(A_n)$ denote the resulting parameter.  We can define a {\it dual Fourier transform} on $\overline R_u(G)_\el^n$ via:
\begin{equation}
\C F\C T^{\vee,n}_{u,\el}: \overline R_u(G)_\el^n\to \overline R_u(G)_\el^n,\quad \pi\mapsto \sum_{\pi'\in \Irr_u(G)_\el^n} \{\lambda_\pi,\lambda_{\pi'}\}_{A_n}\cdot \pi',
\end{equation}
where $\{~,~\}_{A_n}$ denotes the nonabelian Fourier transform for the group $A_n$.

\begin{definition}
Suppose $G=G_2$. By Theorem \ref{t:reeder} and (\ref{e:ell-decomp}), $\overline R_u(G)_\el=\bigoplus_{n\in \C U_{G^\vee}^\el} \overline R_u(G)_\el^n$. By Example \ref{e:G2-params}, we may consider Lusztig's nonabelian Fourier transform, which we denote $\C F\C T^{\vee,n}_{u,\el}$, with respect to the parameter space  $\Sigma_n=M(\C S_{G^\vee}(n))$ for $n\in \C U_{G^\vee}^\el$. Define the \emph{dual elliptic unipotent nonabelian Fourier transform} to be
\begin{equation}
\C F\C T^{\vee}_{u,\el}: \overline R_u(G)_\el\to \overline R_u(G)_\el,\quad \C F\C T^{\vee}_{u,\el}=\bigoplus_{n\in \C U_{G^\vee}^\el} \C F\C T^{\vee,n}_{u,\el}.
\end{equation}
\end{definition}

We remark that, in the natural bases described above, the map $\C F\C T^{\vee}_{u,\el}$ is block diagonal $$\C F\C T^{\vee}_{u,\el}=\left(\begin{matrix}\{~,~\}_1 &0\\ 0&\{~,~\}_{S_3}\end{matrix}\right)$$ with blocks of sizes $1$ and $8$, while $\C F\C T_{u,\el}$ is block diagonal $$\C F\C T_{u,\el}=\left(\begin{matrix}\C F\C T_\el(G_2({\mathbb F}_q)) &0 &0\\0&\C F \C T_\el((A_1+\wti A_1)({\mathbb F}_q)) &0\\ 0&0&\C F\C T_\el(A_2({\mathbb F}_q))\end{matrix}\right)$$ with blocks of sizes $7$, $1$, $1$.

\begin{theorem} Suppose $G=G_2$. The diagram
\begin{equation}
\xymatrixcolsep{5pc}\xymatrix{
\overline R_u(G)_\el \ar[d]_{\res_{u,\el}} \ar[r]^{\C F\C T^{\vee}_{u,\el}} &\overline R_u(G)_\el \ar[d]^{\res_{u,\el}}\\
\overline\CH(G)^\el_u \ar[r]_{\C F\C T_{u,\el}} &\overline\CH(G)^\el_u 
}
\end{equation}
is commutative.
\end{theorem}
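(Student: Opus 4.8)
The plan is to verify the commutativity of the square by explicit computation in the two concrete bases that have already been set up, reducing everything to the two finite-group Fourier transforms that are displayed in the excerpt. Since $\overline R_u(G)_\el$ for $G=G_2$ is $9$-dimensional with the basis $v_1,\dots,v_9$ (the nine irreducible unipotent square integrable representations, one with unipotent part $G_2$ and eight with unipotent part $G_2(a_1)$), and $\overline\CH(G)^\el_u=\bigoplus_{J\in\Pi_{a,\max}}\overline R_u(M_J)_\el$ is also $9$-dimensional (the three maximal parahorics of $G_2$ have reductive quotients of types $G_2$, $A_1+\widetilde A_1$, $A_2$, contributing $7+1+1$), both composites in the diagram are honest $9\times 9$ matrices in these bases. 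The strategy is: (i) compute the matrix of $\res_{u,\el}$ in the bases $\{v_1,\dots,v_9\}$ and $\mathcal B_\el(G_2(\mathbb F_q))\sqcup \mathcal B_\el((A_1+\widetilde A_1)(\mathbb F_q))\sqcup \mathcal B_\el(A_2(\mathbb F_q))$; (ii) recall the matrix of $\C F\C T_{u,\el}$, which is block-diagonal with blocks $\C F\C T_\el(G_2(\mathbb F_q))$ (size $7$, already displayed), $\C F\C T_\el((A_1+\widetilde A_1)(\mathbb F_q))$ and $\C F\C T_\el(A_2(\mathbb F_q))$ (both size $1$, hence scalars, in fact both equal to the scalar $\tfrac12$ coming from the sign/Steinberg entry of a rank-one group); (iii) recall the matrix of $\C F\C T^{\vee}_{u,\el}$, block-diagonal with a $1\times 1$ block on $v_1$ and the $8\times 8$ nonabelian Fourier transform $\{~,~\}_{S_3}$ on $v_2,\dots,v_9$ (reordered according to the parameters $(y,\tau)\in M(S_3)$ recorded in Table~\ref{t:G2-packets}); and (iv) check the matrix identity $\res_{u,\el}\circ \C F\C T^{\vee}_{u,\el}=\C F\C T_{u,\el}\circ\res_{u,\el}$ entry by entry.

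The key intermediate computation, and the one requiring the most care, is step (i): one must compute, for each of the nine representations $v_i$ and each of the three maximal parahoric subgroups $P_J$, the Jacquet restriction $\res_{P_J}(v_i)$ (the character of $M_J$ on $V_{v_i}^{U_J}$) and then project it onto the elliptic subspace $\overline R_u(M_J)_\el$. For the four Iwahori-spherical representations $v_2,v_3,v_4,v_5$ this is a finite-dimensional computation in the appropriate affine Hecke algebra module: one restricts the Iwahori-Hecke module to each parahoric Hecke algebra $\CH(M_J)$ and reads off the multiplicities of unipotent characters of $M_J$. For the Steinberg representation $v_1$ the restrictions are the Steinberg characters (up to sign twist) of the $M_J$'s, whose elliptic projections are easy. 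For the four supercuspidals $v_6,v_7,v_8,v_9=\text{c-ind}_{K_0}^G(\widetilde\rho)$ with $\rho$ a cuspidal unipotent character of $G_2(\mathbb F_q)$, Frobenius reciprocity and Mackey theory over the double cosets $P_J\backslash G/K_0$ give $\res_{P_J}$; the crucial point (Lemma~\ref{l:MP} of Moy--Prasad) is that the only nonzero contribution to the elliptic part comes from the hyperspecial vertex itself, so $\res_{u,\el}(v_j)$ for $j\in\{6,7,8,9\}$ is supported on the $G_2(\mathbb F_q)$-block and equals the elliptic projection of the corresponding cuspidal character $G_2[\zeta]$, which is itself already elliptic. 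Organizing all of this neatly is exactly the content of a table (the ``Table~\ref{t:G2-res}'' alluded to in the acknowledgements), and I would present it as such.

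Having assembled the matrices, step (iv) becomes a direct check. Here the conceptual point that makes it work is the compatibility, recorded in Proposition~\ref{p:pseudo}(3), that $\res_{u,\el}$ is an isometry $\EP\to$ (sum of ordinary character pairings); combined with Lemma~\ref{l:M-param} identifying $\Sigma_n\cong M(\C S_{G^\vee}(n))$ and with Lusztig's Theorem~\ref{t:finite-lusztig} (which says the change of basis from irreducible unipotent characters to almost characters on a finite group of Lie type is governed by the same nonabelian Fourier transform $\{~,~\}$ attached to the relevant component group, all $\Delta=1$ for $G_2$), the restriction map essentially intertwines the parameterization by $M(S_3)$ on the $p$-adic side with the parameterization by $M(S_3)$ of the unipotent characters of the finite group $G_2(\mathbb F_q)$ living at the hyperspecial vertex, plus the parameterizations of the rank-one finite groups at the other two vertices. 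The two Fourier transforms are then both expressed through the same $S_3$-matrix, restricted to complementary ``slices'' of the $8$-dimensional space, and the matrix identity reduces to the consistency of these two slicings of one and the same operator. I expect the main obstacle to be purely bookkeeping: getting all signs, orientation characters $\epsilon_J$, the factors $|\Omega_J|$ (or their absence, in the simply-connected normalization used in the definition of $\res_{u,\el}$), and the normalizations of the orthonormal elliptic bases $\sqrt6\cdot 1_{c(G_2)^i}$ consistent across the three parahorics, so that the computed $9\times 9$ matrices for the two composites actually coincide rather than differing by a diagonal rescaling. Once the $\res_{u,\el}$ matrix is correctly pinned down, the final verification is a short linear-algebra check that can be displayed explicitly.
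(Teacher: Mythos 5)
Your overall strategy is the same as the paper's: compute the $9\times 9$ matrix of $\res_{u,\el}$ in the bases $\{v_1,\dots,v_9\}$ and $\C B_\el(G_2(\mathbb F_q))\sqcup\C B_\el((A_1+\wti A_1)(\mathbb F_q))\sqcup\C B_\el(A_2(\mathbb F_q))$, using the Moy--Prasad lemma for the supercuspidals (indeed $\res_{P_{J_j}}(v_i)=\delta_{i,j}\mu_i$ for $6\le i\le 9$) and restriction of Iwahori--Hecke modules to the parahoric subalgebras $\CH(P_J,I)$ for $v_1,\dots,v_5$, then verify the matrix identity $[\res_{u,\el}]\cdot[\C F\C T_{u,\el}]\cdot[\res_{u,\el}]^T=[\C F\C T^{\vee}_{u,\el}]$ entry by entry. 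This is exactly how the paper proceeds, including the organization of the restrictions into a table.

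There is, however, one concrete error in your step (ii) that would make the final check fail as stated: the two $1\times 1$ blocks of $\C F\C T_{u,\el}$ at the vertices of type $A_1+\wti A_1$ and $A_2$ are not the scalar $\tfrac12$. For a finite reductive group of type $A$, every family in $\Irr~\C W$ is a singleton with $\Gamma_{\C F}=1$, so Lusztig's nonabelian Fourier transform is the identity on $R_u(\C G^F)$, and hence $\C F\C T_\el((A_1+\wti A_1)(\mathbb F_q))$ and $\C F\C T_\el(A_2(\mathbb F_q))$ are both equal to $1$ on their one-dimensional elliptic spaces. This is also forced by consistency: $[\res_{u,\el}]$ is orthogonal (it is an isometry between spaces of equal dimension) and $[\C F\C T^{\vee}_{u,\el}]$ is orthogonal, so $[\C F\C T_{u,\el}]=[\res_{u,\el}]^T[\C F\C T^{\vee}_{u,\el}][\res_{u,\el}]$ must be orthogonal, which a diagonal entry $\tfrac12$ would violate. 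With these two blocks corrected to $1$, the rest of your outline reproduces the paper's proof.
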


\begin{proof} The proof is based on a direct computation. The restrictions $\res_{P_J}(v_i)$ are given in Table \ref{t:G2-res}. This has been computed using the  result of Moy-Prasad, see Lemma \ref{l:MP}, and the reduction to Iwahori-Hecke algebras. 
In particular, if we apply Lemma \ref{l:MP} to the supercuspidal representations $v_i=\text{c-ind}_{P_{J_0}}^{G_2}(\mu_i)$, $6\le i\le 9$, where $\mu_i$ is a cuspidal unipotent representation of $G_2({\mathbb F}_q)$ as in the table, then we see that $\res_{P_{J_j}}(v_i)=\delta_{i,j}\mu_i$.

The representations $v_1-v_5$ are all Iwahori-spherical. Let $\CH(G,I)$ denote the Iwahori-Hecke algebra of compactly supported, smooth, $I$-biinvariant functions \cite{Bo,IM}. This is isomorphic to the affine Hecke algebra of type $G_2$ with equal parameters. Let $\CH(P_J,I)$ denote the subalgebra of $\CH(G,I)$ of functions whose support is in $P_J$. This is isomorphic to the finite Hecke subalgebra of type $W_J$. The subspace of Iwahori-fixed vectors $v_i^I$ is naturally an $\CH(G,I)$-module and therefore, we may consider the restriction of $v_i^I$ to $\CH(P_J,I)$.

The restriction $\res_{P_{J_j}}(v_i)$ can be computed at the level of the Iwahori-Hecke algebra. More precisely, suppose that $\mu$ is an $M_J$-type such that $\mu$ appears in $\res_{P_{J}}(v_i)$, $1\le i\le 5.$ Let $\wti\mu$ denote the pullback of $\mu$ to $P_J$. Then, by Lemma \ref{l:MP}, $\wti\mu^I\neq 0$. Moreover:
\begin{equation}
\dim\Hom_{M_J}(\mu,v_i^{U_{J}})=\dim\Hom_{\CH(P_J,I)}(\wti\mu^I, v_i^I).
\end{equation}
The structure of $v_i^I$ as Hecke algebra modules is well known, and this is how we compute these restrictions.
\begin{table}[h]
\caption{Restrictions of unipotent elliptic $G_2$-representations\label{t:G2-res}}
\begin{tabular}{|c|c|c|c|}
\hline
$\Irr_u(G_2)_\el$ &$J_0=\{\al_1,\al_2\}=G_2$ &$J_1=\{\al_0,\al_2\}=A_1+\wti A_1$ &$J_2=\{\al_0,\al_1\}=A_2$\\
\hline
$v_1$ &$\phi_{(1,6)}$ &$\sgn_0\otimes \sgn_2$ &$\sgn$\\
\hline
$v_2$ &$\phi_{(1,6)}+\phi_{(2,1)}$ &$\sgn_0\otimes \sgn_2+\sgn_0\otimes\triv_2+\triv_0\otimes \sgn_2$ &$\sgn+\refl$  \\
\hline
$v_3$ &$\phi_{(1,3)}'$ &$\sgn_0\otimes \triv_2$ &$\sgn$\\
\hline
$v_4$ &$\phi_{(1,6)}+\phi_{(1,3)}''$ &$\triv_0\otimes\sgn_2+\sgn_0\otimes\sgn_2$ &$\refl$ \\
\hline
$v_5$ &$\phi_{(1,6)}+\phi_{(2,2)}$ &$\sgn_0\otimes \sgn_2+\sgn_0\otimes\triv_2+\triv_0\otimes\sgn_2$ &$\sgn+\refl$\\
\hline
$v_6$ &$G_2[1]$ &$0$ &$0$\\
\hline
$v_7$ &$G_2[-1]$ &$0$ &$0$\\
\hline
$v_8$ &$G_2[\theta]$ &$0$ &$0$\\
\hline
$v_9$ &$G_2[\theta^2]$ &$0$ &$0$\\
\hline
\end{tabular}
\end{table}

Next, once we have $\res_{P_J}(v_i)$, we need to compute $\res_{u,\el}(v_i)$ in terms of the orthonormal bases $\C B_\el(G_2({\mathbb F}_q))$ and 
\begin{equation}
\C B_\el((A_1+\wti A_1)({\mathbb F}_q))=\left\{\frac 12 (\triv_0-\sgn_0)\otimes (\triv_2-\sgn_2)\right\} \text{ and } \C B_\el(A_2({\mathbb F}_q))=\left\{\frac 1{\sqrt 3} (\triv-\refl+\sgn)\right\}.
\end{equation}
This is done by projecting $\sum_{J\text{ max}}\res_{P_J}(v_i)$ onto the elliptic space. In terms of the ordered bases listed before, the matrix of $\res_{u,\el}$ is the orthogonal matrix:
\begin{equation}
[\res_{u,\el}]=\left(\begin{matrix}
1/\sqrt{6} & 2/\sqrt{6} &-1/\sqrt{6} &0 &0 &0 &0 &0 &0\\
1/\sqrt{6} &0 &1/\sqrt{6} &2/\sqrt{6} &0 &0 &0 &0 &0\\
1/2\sqrt{3} &-1/2\sqrt{3} &-1/2\sqrt{3} &0 &\sqrt{3}/2 &0 &0 &0 &0\\
 0 &0 &0 &0 &0 &1 &0 &0 &0\\
 0 &0 &0 &0 &0 &0 &1 &0 &0\\
  0 &0 &0 &0 &0 &0 &0 &1 &0\\
  0 &0 &0 &0 &0 &0 &0 &0 &1\\
  1/2 &-1/2 &-1/2 &0 &-1/2 &0 &0 &0 &0\\
  1/\sqrt{3} &0 &1/\sqrt{3} &-1/\sqrt{3} &0 &0 &0 &0 &0
  \end{matrix}
  \right)
\end{equation}
The claim now amounts to the direct verification that 
\[[\res_{u,\el}]\cdot [\C F\C T_{u,\el}]\cdot [\res_{u,\el}]^T=[\C F\C T^{\vee}_{u,\el}].
\]
\end{proof}

\ifx\undefined\bysame
\newcommand{\bysame}{\leavevmode\hbox to3em{\hrulefill}\,}
\fi

\end{document}